\theoremstyle{definition}
\newtheorem{definition}{Definition}
\theoremstyle{theorem}
\newtheorem{proposition}[definition]{Proposition}
\newtheorem{lemma}[definition]{Lemma}
\newtheorem{theorem}[definition]{Theorem}
\newtheorem{assumption}[definition]{Assumption}
\numberwithin{equation}{section}
\numberwithin{definition}{section}
\theoremstyle{remark}
\newtheorem{remark}[definition]{Remark}
\def\PP{\mathsf P}
\def\FF{\mathcal F}
\def\diffusion{\sigma^2}
\def\drift{\gamma}
\def\measure{\nu}
\def\set{V}
\def\Wq{W^{(q)}}
\def\Wl{W^{(\lambda)}}
\def\Iq{I^{(q)}}
\def\nn{\mathbf{n}}
\begin{document}
\title{Excursions of a spectrally negative L\'evy process from a two-point set}
\author{Matija Vidmar}
\address{Department of Mathematics, University of Ljubljana and Institute of Mathematics, Physics and Mechanics, Slovenia}
\email{matija.vidmar@fmf.uni-lj.si}

\begin{abstract}
Let $a\in (0,\infty)$. For a spectrally negative L\'evy process  $X$ with infinite variation paths the resolvent of the process  killed on hitting the two-point set $\set=\{-a,a\}$ is identified.  When further $X$ has no diffusion component  the Laplace transforms of the entrance laws of the excursion measures of $X$ from $V$ are determined. This is then applied to establishing the Laplace transform of the amount of time that elapses between the last visit of $X$ to a given point $x$, before hitting some other point $y>x$, and the hitting time of $y$. 
All the expressions are explicit and tractable in the standard fluctuation quantities associated to $X$.
\end{abstract}

\thanks{The author acknowledges support from the Slovenian Research Agency (research core funding No. P1-0222) and is grateful for the hospitality of CIMAT, during a visit to which the majority of the present research was conducted. 
}
\keywords{Spectrally negative L\'evy processes; fluctuation theory; excursions; exit systems; local times}

\subjclass[2010]{Primary: 60G51; Secondary: 60J25}

\maketitle

\section{Introduction}
Let $X$ be a spectrally negative L\'evy process \cite[Chapter 8]{kyprianou}, realized canonically on the space $\Omega$ of real-valued c\`adl\`ag paths with lifetime, conforming with the usual hypotheses of the general theory of Markov processes. In particular $X$ comes equipped with a family of laws $(\PP^x)_{x\in \mathbb{R}}$ corresponding to its starting position, time-shift operators $(\theta_t)_{t\in [0,\infty)}$, (unaugmented) natural filtration $(\FF^0_t)_{t\in [0,\infty)}$, cemetery state $\partial\notin \mathbb{R}$, and associated lifetime $\zeta$. For brevity set $\PP:=\PP^0$: then $\PP(X_0=0)=0$ and, for $x\in \mathbb{R}$, $\PP^x$ is the law of $X+x$ under $\PP$. We  denote by $\psi$ the Laplace exponent of $X$: $$\PP[ e^{\lambda X_t}]=e^{\psi(\lambda) t}, \quad \psi(\lambda)=\frac{1}{2}\diffusion\lambda^2+\gamma\lambda+\int_{(-\infty,0)}\left(e^{\lambda x}-1-\lambda x\mathbbm{1}_{[-1,0)}(x)\right)\measure(dx),\quad \{\lambda,t\}\subset [0,\infty),$$ where $\measure$ is a measure on $\mathcal{B}_\mathbb{R}$, carried by $(-\infty,0)$ and satisfying $\int( 1\land x^2)\measure(dx)<\infty$, $\diffusion\in [0,\infty)$ and $\drift\in \mathbb{R}$. 

Fix now an $a\in (0,\infty)$. In the present paper the aim is to identify the laws of the excursions of $X$ from the-two point set $V:=\{-a,a\}$. Such identification is only non-trivial when the point $0$ is regular for itself, and we make the standing assumption throughout the remainder of this paper that this is so: recall that it is equivalent to assuming that 
\begin{assumption}\label{assumption}
Either $\diffusion>0$ or else $\int (1\land \vert x\vert)\Pi(dx)=\infty$.
\end{assumption}
The latter also guarantees that  $X$ exits and enters $V$ continuously (i.e. is continuous at the left and the right endpoints of the open intervals contiguous to $M:=\overline{\{t\in [0,\infty):X_t\in V\}}$) \cite{millar}; that it hits points \cite{kesten}; and that $0$ is regular for both of the halflines (i.e. for $(-\infty,0)$ and $(0,\infty)$) \cite[p. 232]{kyprianou}.

In order to make our mandate more precise we introduce now some notation concerning the `exit' measures of $X$ from $\set$ and their associated entrance laws, recalling in parallel some theory that we will need later on. 

In this regard begin by setting $\rho:=\inf\{t\in (0,\infty):X_t\in V\}$ and let $G$ be the set of the strictly positive left endpoints of the intervals contiguous to $M$. Then according to \cite[Chapter~VII]{blumenthal} \cite{maisonneuve} there exist 

(1)  a local time at $V$, i.e. a, up to indistinguishability unique, continuous additive functional $L=(L_t)_{t\in [0,\infty)}$, whose support in the sense of measures is indistinguishable from $M$, and whose $1$-potential is $\PP^\cdot [\int_0^\infty e^{-t}dL_t]=\PP^\cdot[ e^{-\rho}]$; 

and 

(2) a system of `exit' measures from $V$, i.e. a unique pair of  measures $\hat\PP^a$ and $\hat\PP^{-a}$ on $\Omega$ satisfying $\hat \PP^{\pm a}(\rho=0)=0$,  $\hat \PP^{\pm a}[1-e^{-\rho}]\leq 1$ and (the master formula) for all predictable nonnegative $Z$ and $\FF^0_\infty/\mathcal{B}_{[0,\infty]}$-measurable $f$, identically,
$$\PP^\cdot\left[\sum_{s\in G}Z_s\cdot f\circ \theta_s\right]=\PP^\cdot \left[\int_0^\infty Z_s \hat \PP^{X_s}[f]dL_s\right].$$ 
The pair $(L,\hat\PP^\cdot)$ is called the  exit system for $X$ from $V$. This system then furthermore enjoys the Markov property: whenever $T$ is an $(\FF^0_{t+})_{t\in [0,\infty)}$-stopping time with $T>0$, $g$ an $\FF^0_{T+}$/$\mathcal{B}_{[0,\infty]}$-measurable map, and $F$ an $\FF^0_\infty$/$\mathcal{B}_{[0,\infty]}$-measurable map, then $$\hat\PP^{\pm a}[g\cdot F\circ \theta_T;T<\rho]=\hat \PP^{\pm a} [g\cdot\PP^{X_T}(F);T<\rho]$$ (where one may remove the stipulation $T>0$ provided $\{T=0\}\subset \{X_0\notin V\}$). Moreover, thanks to $X$ exiting $V$ continuously and thanks to the master formula, we have $\hat \PP^{\pm a}(X_0\ne \pm a)=0$, whence (see the discussion of \cite[p. 233]{blumenthal}) the master formula persists for all optional $Z$. We recall also that by the results of \cite{millar} (see discussion in \cite[p. 85]{ppr}) when $\diffusion=0$ then $\hat \PP^{a}(\tau_a^-=0)=\hat \PP^{-a}(\tau_{-a}^-=0)=0$, while $\hat\PP^{a}(\tau_{ a}^\pm=0)$ and  $\hat\PP^{-a}(\tau_{ -a}^\pm=0)$ are all non-zero when $\diffusion>0$.

For $t\in (0,\infty)$ we next set $$\eta_t^{\pm a}(dx):=\hat\PP^{\pm a}(X_t\in dx, t<\rho)$$ and further for $x\in \mathbb{R}$, $$Q_t(x,dy):=\PP^x(X_t\in dy,t<\rho).$$ $Q_t(x,dy)$ is the semigroup of $X$ killed on hitting $V$ and $(\eta_t^{\pm a})_{t\in (0,\infty)}$ are entrance laws for $(Q_t)_{t\in (0,\infty)}$, in the sense that, for $\{t,s\}\subset (0,\infty)$, $\eta_{t+s}^{\pm a}=\eta_t^{\pm a}Q_s$: indeed, for $\mathcal{B}_\mathbb{R}/\mathcal{B}_{[0,\infty]}$-measurable $f$, by the Markov property under $\hat \PP^{\pm a}$,  $\eta_{t+s}^{\pm a}[f]=\hat \PP^{\pm a}[f(X_{t+s});t+s<\rho]=\hat \PP^{\pm a}[\PP^{X_t}[f(X_s);s<\rho];t<\rho]=\eta_t^{\pm a} [Q_s(\cdot,f)]$. 

Lastly, for $q\in (0,\infty)$, $x\in \mathbb{R}$, we let the resolvent $V_q(x,\cdot)$  of the killed process be defined via $$V_q(x,dy):=\int_0^\infty e^{-q t}Q_t(x,dy)dt,$$ and we set $$\hat \eta^{\pm a}_q:=\int_0^\infty e^{-qt}\eta_t^{\pm a}dt.$$

We provide information about the structure of the excursions from $\set$ by identifying   (i) $(V_q)_{q\in (0,\infty)}$ (Proposition~\ref{proposition:killed}) and (ii)  $(\hat\eta_q^{- a})_{q\in (0,\infty)}$ and, when $\diffusion=0$, $(\hat\eta_q^{ a})_{q\in (0,\infty)}$ (Theorem~\ref{theorem:main}; see Remark~\ref{remark:missing} for the reasons behind the omission of $(\hat\eta_q^{ a})_{q\in (0,\infty)}$  in the case when $ \diffusion>0$), which is related to analyzing the process conditioned to avoid $\set$ (Theorem~\ref{theorem:conditioned}). The expressions obtained are explicit in the usual scale functions (see Section~\ref{section:notation}) associated to $X$. As an application we characterize the law of the amount of time that elapses between the last visit of $X$ to a given point $x$, before hitting some other point $y>x$, and the hitting time of $y$, on the event that $y$ is hit at all (Theorem~\ref{theorem:application}). 

This investigation may be seen as a natural offspring of the results of \cite{ppr}, wherein the excursion measure of $X$ away from a point was studied, and to which we refer for a  review of the relevant literature.

The organization of the remainder of this paper is as follows. Section~\ref{section:notation} introduces some further standard notation from the fluctuation theory for $X$. In Section~\ref{section:local}  we establish connections between $L$ and $\hat{\PP}^{\pm a}$ on the one hand and the apposite quantities that are to do with visits to a single point on the other. Then Section~\ref{section:killed/conditioned-to-avoid} discusses the process $X$ killed on hitting $\set$ and conditioned to avoid $\set$. The latter is brought to bear on the problem of determining $(\hat\eta_q^{\pm a})_{q\in (0,\infty)}$ in Section~\ref{section:entrance-laws}, before closing with an application in  Section~\ref{section:applications}.

\section{Review of some further notation from the fluctuation theory of $X$}\label{section:notation}
The Laplace exponent $\psi$ has a largest zero, which we denote by $\Phi(0)$. We set $\Phi:=(\psi\vert_{[\Phi(0),\infty)})^{-1}$ for the right-continuous inverse of $\psi$. From $\psi\circ \Phi=\mathrm{id}_{[0,\infty)}$, it follows upon differentiation that $(\psi'\circ \Phi)\Phi'=1$, at least on $(0,\infty)$. 

Associated to $X$ is a family of scale functions $(\Wq)_{q\in [0,\infty)}$ that feature heavily in first passage/exit and related fluctuation identities \cite[Chapter~8]{kyprianou}. Specifically, for $q\in [0,\infty)$, $\Wq$ is characterized as the unique function mapping $\mathbb{R}$ to $[0,\infty)$, vanishing on $(-\infty,0)$, continuous on $[0,\infty)$ and having Laplace transform $$\int_0^\infty e^{-\theta x}\Wq(x)dx=\frac{1}{\psi(\theta)-q},\quad \theta\in (\Phi(q),\infty).$$ Since we are assuming $X$ has unbounded variation, $\Wq(0)=0$  \cite[Lemma~8.6]{kyprianou} and $\Wq$ is of class $C^1$ on $(0,\infty)$ \cite[p. 241]{kyprianou} for all $q\in [0,\infty)$. 

For $q\in (0,\infty)$, we denote by $U_q$ the $q$-resolvent measure of $X$, $U_q(dy):=\int_0^\infty e^{-qt}\PP(X_t\in dy)dt$. It admits a continuous density $u_q$ with respect to Lebesgue measure: $u_q(y)=\Phi'(q)e^{-\Phi(q)y}-\Wq(-y)$ for $y\in \mathbb{R}$ \cite[Corollary~8.9]{kyprianou}. 

Finally, for $z\in \mathbb{R}$, we set $\tau^+_z:=\inf\{t\in (0,\infty):X_t>z\}$, $\tau^-_z:=\inf\{t\in (0,\infty):X_t<z\}$ and  $T_z:=\inf\{t\in (0,\infty):X_t=z\}$.

\section{Local time at $V$ and connection to exit measure from zero of \cite{ppr}}\label{section:local}
For $x\in \mathbb{R}$, let $\tilde{\PP}^x$ be the exit measure from the point $x$ and let $L^x$ be the local time at $x$ of $X$, normalized in such a way that, with $G^x$ being the set of strictly positive left endpoints of the open intervals contiguous to $\overline{\{t\in [0,\infty):X_t=x\}}$, (the master formula) $\PP^\cdot[\sum_{s\in G^{x}}Z_s f\circ \theta_s]=\PP^\cdot[ \int_0^\infty Z_s dL_s^{a}]\tilde{\PP}^{x}[f]$ holds true for all predictable nonnegative $Z$ and $\FF^0_\infty/\mathcal{B}_{[0,\infty]}$-measurable $f$, while $\PP^\cdot[ e^{-T_x}]=\PP^\cdot[\int_0^\infty e^{-s}dL^x_s]$.

Set, for $q\in (0,\infty)$,
\begin{equation}\label{eq:local-times}
\alpha_a(q):=e^{\Phi(q)2a}\left(1-\frac{\Phi'(q)}{\Wq(2a)}(e^{\Phi(q)2a}-1)\right)\text { and }\alpha_{-a}(q):=\frac{\Phi'(q)}{\Wq(2a)}(e^{\Phi(q)2a}-1).
\end{equation}

\begin{remark}
For $q\in (0,\infty)$, by optional sampling of the exponential martingale $(e^{\Phi(q)X_t-qt})_{t\in [0,\infty)}$, $\PP[ e^{-q T_{2a}}]=e^{-\Phi(q)2a}$ \cite[Eq.~(3.15)]{kyprianou}, whilst $\PP [e^{-qT_{-2a}}]=\PP^{2a}[e^{-qT_0}]=\frac{u_q(-2a)}{u_q(0)}=e^{\Phi(q)2a}-\Wq(2a)/\Phi'(q)$ \cite[Eq.~(9)]{ppr}. Hence $\alpha_a(q)=\frac{1-\PP [e^{-qT_{-2a}}]}{1-\PP[ e^{-qT_{-2a}}]\PP [e^{-qT_{2a}}]}$ and $\alpha_{-a}(q)=\frac{1-\PP[ e^{-qT_{2a}}]}{1-\PP[ e^{-qT_{-2a}}]\PP[ e^{-qT_{2a}}]}$.
\end{remark}

\begin{proposition}
$L=\alpha_a(1)L^a+\alpha_{-a}(1)L^{-a}$ up to indistinguishability.
\end{proposition}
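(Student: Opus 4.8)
The plan is to exploit the characterization, recorded in item (1) of the Introduction, of $L$ as the up-to-indistinguishability unique continuous additive functional (CAF) whose support is $M$ and whose $1$-potential is $\PP^\cdot[e^{-\rho}]$. Writing $B:=\alpha_a(1)L^a+\alpha_{-a}(1)L^{-a}$, it therefore suffices to check that $B$ is a CAF sharing these two defining features; the asserted uniqueness then forces $B=L$ up to indistinguishability.

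First I would record that $\alpha_a(1)$ and $\alpha_{-a}(1)$ are strictly positive. For $\alpha_{-a}(q)$ this is immediate from $\Phi'(q)>0$, $\Wq(2a)>0$ and $\Phi(q)>0$ (the last because $q>0$ forces $\Phi(q)>\Phi(0)\geq 0$); for $\alpha_a(q)$ it follows from the representation in the Remark, $\alpha_a(q)=\frac{1-\PP[e^{-qT_{-2a}}]}{1-\PP[e^{-qT_{-2a}}]\PP[e^{-qT_{2a}}]}$, together with $\PP[e^{-qT_{\pm 2a}}]\in(0,1)$ for $q>0$. Being a combination of the CAFs $L^a$ and $L^{-a}$ with nonnegative constant coefficients, $B$ is then itself a CAF, and since both coefficients are strictly positive its support is $\mathrm{supp}(L^a)\cup\mathrm{supp}(L^{-a})=\overline{\{t:X_t=a\}}\cup\overline{\{t:X_t=-a\}}=\overline{\{t:X_t\in V\}}=M$, as required.

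It remains to identify the $1$-potential $u_B(x):=\PP^x[\int_0^\infty e^{-s}\,dB_s]=\alpha_a(1)\PP^x[e^{-T_a}]+\alpha_{-a}(1)\PP^x[e^{-T_{-a}}]$, where I have inserted the normalizations of $L^a$ and $L^{-a}$, and to show it equals $\PP^\cdot[e^{-\rho}]$. The observation driving this is that a CAF supported on $M$ does not increase on $[0,\rho)$, so the strong Markov property at $\rho$ gives $u_B(x)=\PP^x[e^{-\rho}u_B(X_\rho)]$; as $X_\rho\in V$ on $\{\rho<\infty\}$, and as the target $\PP^\cdot[e^{-\rho}]$ satisfies the very same relation $f(x)=\PP^x[e^{-\rho}f(X_\rho)]$ (because $\PP^{\pm a}[e^{-\rho}]=1$ and $X_\rho\in V$), both functions are determined by their restriction to $V$, so it suffices to check $u_B(\pm a)=1$. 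Evaluating with $\PP^a[e^{-T_a}]=\PP^{-a}[e^{-T_{-a}}]=1$ and the translation identities $\PP^a[e^{-T_{-a}}]=\PP[e^{-T_{-2a}}]$ and $\PP^{-a}[e^{-T_a}]=\PP[e^{-T_{2a}}]$ from the Remark, the requirements $u_B(a)=1$ and $u_B(-a)=1$ become precisely the two linear equations whose solution is, by the Remark, the pair $(\alpha_a(1),\alpha_{-a}(1))$. Hence $u_B=\PP^\cdot[e^{-\rho}]$, and the uniqueness in the characterization of $L$ yields $B=L$ up to indistinguishability.

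The genuinely substantive step is the reduction of the $1$-potential identity from all of $\mathbb{R}$ to the two points of $V$ via the strong Markov property at $\rho$; this hinges on $B$ (and $L$) being supported on $M$, hence not increasing before $\rho$, together with the continuity of entrance guaranteeing $X_\rho\in V$. Once this reduction is secured, matching the potentials at $a$ and $-a$ is exactly the linear-system content of the Remark, so no further computation is needed.
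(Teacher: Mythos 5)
Your proof is correct and follows essentially the same route as the paper: verify that $\alpha_a(1)L^a+\alpha_{-a}(1)L^{-a}$ is a continuous additive functional supported on $M$, compute its $1$-potential by applying the strong Markov property at $\rho$, and reduce the identity $u_B=\PP^\cdot[e^{-\rho}]$ to the two linear equations at $\pm a$ encoded in the Remark. Your explicit check that the coefficients are strictly positive (needed for the support claim) is a detail the paper leaves implicit, but the argument is the same.
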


\begin{proof}
$\alpha_a(1)L^a+\alpha_{-a}(1)L^{-a}$ is a continuous additive functional whose support is $M$. Moreover, $\PP^\cdot [\int_0^\infty e^{-s}d(\alpha_a(1)L^a+\alpha_{-a}(1)L^{-a})_s]=\PP^\cdot [e^{-\rho}\PP^{X_\rho}[\alpha_a(1) \int_0^\infty e^{-s}dL^a_s+\alpha_{-a}(1)\int_0^\infty e^{-s}dL^{-a}_s]]=\alpha_a(1)\PP^\cdot [e^{-\rho};X_\rho=a]+\alpha_a(1)\PP^\cdot[ e^{-\rho};X_\rho=-a]\PP^{-a}[e^{-T_a}]+\alpha_{-a}(1)\PP^{\cdot}[e^{-\rho};X_\rho=a]\PP^a [e^{-T_a}]+\alpha_{-a}(1)\PP^\cdot [e^{-\rho};X_\rho=-a]=\PP^\cdot [e^{-\rho};X_\rho=a](\alpha_a(1)+\alpha_{-a}(1)\PP [e^{-T_{-2a}}])+\PP^\cdot [e^{-\rho};X_\rho=-a](\alpha_a(1)\PP[e^{-T_{2a}}]+\alpha_{-a}(1))=\PP^\cdot [e^{-\rho}]$. 
\end{proof}

Next, if in the master formulae for exits from $a$ and $\{-a,a\}$ we take $f=g$ and $f=g\mathbbm{1}(X_0=a)$, respectively, for an $\FF^0_\infty/\mathcal{B}_{[0,\infty]}$-measurable $g$, then we see (from the suitable uniqueness of the exit systems) that  $\alpha_a(1)\hat\PP^a=\tilde{\PP}^a$. Similarly $\alpha_{-a}(1)\hat\PP^{-a}=\tilde{\PP}^{-a}$. Moreover, by spatial homogeneity, $\tilde{\PP}^{\pm a}$ is the push-forward of $\tilde{\PP}^0$ under translation by $\pm a$. Finally, it is easy to relate $\tilde{\PP}^0$ to the exit measure away from zero as normalized in \cite{ppr}, and where it was denoted $\nn$. Indeed, since (from the master formula) $\tilde{\PP}^0[1-e^{-T_0}]=1$, it follows from \cite[Eq.~(17)]{ppr} that $\tilde{\PP}^0=u_1(0)\nn=\Phi'(1)\nn$.

\section{The process $X$ killed on hitting, and conditioned to avoid $\set$}\label{section:killed/conditioned-to-avoid}
Let $e_1$ be a mean one exponentially distributed random time independent of $X$ (under each $\PP^\cdot$ and each $\hat{\PP}^\cdot$, by innocuously enlarging the underlying space).  For $q\in (0,\infty)\backslash \{1\}$, set $e_q:=e_1/q$, and then further for $x\in \mathbb{R}$, $$h_q(x):=\PP^x(\rho>e_q).$$  The device of using the independent exponential random times  will be for convenience/matters of interpretation (e.g. ``killing at an independent exponential random time'') only: in particular in all the expressions one can always just integrate them out to arrive at statements/expressions that omit their usage. For instance $h_q(x)$ is the probability that $X$ started from $x$ and killed at $e_q$ avoids $\set$, but it could just as well be written $\PP^x[1-e^{-q\rho}]$.

The following proposition, which is our first main result, identifies explicitly the resolvent of the process $X$ killed on hitting $\set$. 

\begin{proposition}\label{proposition:killed}
Let $q\in (0,\infty)$ and $x\in \mathbb{R}$. $V_q(x,\cdot)$ is absolutely continuous with respect to Lebesgue measure and its density, denoted $v_q(x,\cdot)$, is given as follows (Lebesgue-a.e. in $y\in \mathbb{R}$): \footnotesize
$$v_q(x,y)=\Wq(x-a)e^{\Phi(q)2a}\left[e^{\Phi(q)(-a-y)}-\frac{\Wq(a-y)}{\Wq(2a)}-\Wq(-a-y)\left(\Phi'(q)^{-1}-\frac{e^{\Phi(q)2a}}{\Wq(2a)}\right)\right]+$$
$$\frac{\Wq(x+a)}{\Wq(2a)}\left[\Wq(a-y)-\Wq(-a-y)e^{\Phi(q)2a}\right]+\Wq(-a-y)e^{\Phi(q)(x+a)}-\Wq(x-y);$$\normalsize
in particular for $x\in (-\infty,a]$, \footnotesize 
$$v_q(x,y)=\frac{\Wq(x+a)}{\Wq(2a)}\left[\Wq(a-y)-\Wq(-a-y)e^{\Phi(q)2a}\right]+\Wq(-a-y)e^{\Phi(q)(x+a)}-\Wq(x-y);$$\normalsize
and if even $x\in (-\infty,-a]$, then $$v_q(x,y)=\Wq(-a-y)e^{\Phi(q)(x+a)}-\Wq(x-y).$$

Furthermore, \footnotesize$$h_q(x)=1-e^{\Phi(q)(a+x)}+\Wq(x+a)\frac{e^{\Phi(q)2a}-1}{\Wq(2a)}+\Wq(x-a)e^{\Phi(q)2a}\left(\Phi'(q)^{-1}-\frac{e^{\Phi(q)2a}-1}{\Wq(2a)}\right);$$ \normalsize in particular when $x\in (-\infty,a]$, $h_q(x)=1-e^{\Phi(q)(a+x)}+\Wq(x+a)\frac{e^{\Phi(q)2a}-1}{\Wq(2a)}$; and if even $x\in (-\infty, -a]$, then $h_q(x)=1-e^{\Phi(q)(a+x)}$. 
\end{proposition}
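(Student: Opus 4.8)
The plan is to reduce everything to the Laplace transforms of the hitting times of the single points $a$ and $-a$, which are already recorded in the Remark (via the exponential martingale and \cite[Eq.~(9)]{ppr}). First I would decompose the full $q$-resolvent according to the first visit to $\set$. Writing $U_q(x,dy)=u_q(y-x)dy$ by spatial homogeneity and splitting the occupation integral $\int_0^\infty e^{-qt}\mathbbm{1}(X_t\in dy)dt$ at $\rho$ (on $\{\rho=\infty\}$ the factor $e^{-q\rho}$ vanishes, so nothing is lost), the strong Markov property at $\rho$ gives
$$U_q(x,dy)=V_q(x,dy)+\PP^x\!\left[e^{-q\rho}U_q(X_\rho,dy)\right].$$
Because $X$ enters $\set$ continuously we have $X_\rho\in\{-a,a\}$ almost surely, so setting $A_q(x):=\PP^x[e^{-q\rho};X_\rho=a]$ and $B_q(x):=\PP^x[e^{-q\rho};X_\rho=-a]$ and using homogeneity again to write $U_q(\pm a,dy)=u_q(y\mp a)dy$, I obtain the density
$$v_q(x,y)=u_q(y-x)-A_q(x)\,u_q(y-a)-B_q(x)\,u_q(y+a),$$
which in particular establishes absolute continuity. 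This reduces the problem to identifying $A_q$ and $B_q$.

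For the second step, note that since $\rho=T_a\wedge T_{-a}$ and $\{X_\rho=a\}=\{T_a<T_{-a}\}$ up to a null set, one has $A_q(x)=\PP^x[e^{-qT_a};T_a<T_{-a}]$ and $B_q(x)=\PP^x[e^{-qT_{-a}};T_{-a}<T_a]$. Conditioning on whichever of the two points is reached first and applying the strong Markov property there yields the $2\times2$ linear system
$$\PP^x[e^{-qT_a}]=A_q(x)+B_q(x)\,\PP^{-a}[e^{-qT_a}],\qquad \PP^x[e^{-qT_{-a}}]=A_q(x)\,\PP^a[e^{-qT_{-a}}]+B_q(x).$$
Every coefficient is a single-point transform, and by spatial homogeneity together with $\PP^z[e^{-qT_c}]=u_q(c-z)/u_q(0)$ (so that $\PP^{-a}[e^{-qT_a}]=e^{-\Phi(q)2a}$ and $\PP^a[e^{-qT_{-a}}]=e^{\Phi(q)2a}-\Wq(2a)/\Phi'(q)$, exactly as in the Remark), all of them are explicit in the scale functions. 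Solving the system,
$$A_q(x)=\frac{\PP^x[e^{-qT_a}]-e^{-\Phi(q)2a}\,\PP^x[e^{-qT_{-a}}]}{1-e^{-\Phi(q)2a}\bigl(e^{\Phi(q)2a}-\Wq(2a)/\Phi'(q)\bigr)},$$
with the companion expression for $B_q(x)$; the common denominator collapses to $e^{-\Phi(q)2a}\Wq(2a)/\Phi'(q)$, confirming invertibility.

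Finally I would substitute $u_q(w)=\Phi'(q)e^{-\Phi(q)w}-\Wq(-w)$ into $A_q,B_q$ and into the formula for $v_q$, and simplify. Here one must track the regions $x>a$, $-a<x\le a$ and $x\le -a$, in which various arguments of $\Wq(\cdot)$ become negative and hence vanish; this is precisely what produces the three piecewise simplifications claimed. For $h_q$ I would either read off $h_q(x)=1-A_q(x)-B_q(x)$ directly, since $\PP^x[e^{-q\rho}]=A_q(x)+B_q(x)$, or equivalently integrate the density, using $q\int_{\mathbb R}v_q(x,y)dy=\PP^x[1-e^{-q\rho}]=h_q(x)$ together with $\int_{\mathbb R}u_q=1/q$. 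The conceptual steps are clean; the main obstacle is purely computational — the algebra of solving the system and collapsing the resulting expression into the stated closed form, along with the careful case analysis that removes the vanishing scale-function terms.
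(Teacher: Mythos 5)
Your proposal is correct, and it takes a genuinely different route from the paper. The paper never writes down the global identity $U_q(x,dy)=V_q(x,dy)+\PP^x[e^{-q\rho}U_q(X_\rho,dy)]$; instead it computes $V_q(x,\cdot)$ region by region ($x\le -a$, $-a\le x\le a$, $x\ge a$) by splitting the path at the first-passage times $\tau^+_{-a}$, $\tau^-_{-a}\wedge\tau^+_a$ and $\tau^-_a$, feeding in the two-sided exit identities and the potential densities of the process killed on exiting an interval, and taking $b\uparrow\infty$ limits of those. Your Hunt-type decomposition, combined with the $2\times 2$ system for $A_q(x)=\PP^x[e^{-q\rho};X_\rho=a]$ and $B_q(x)=\PP^x[e^{-q\rho};X_\rho=-a]$ solved from the single-point transforms $\PP^z[e^{-qT_c}]=u_q(c-z)/u_q(0)$, is sound: the denominator $1-\PP^{-a}[e^{-qT_a}]\PP^a[e^{-qT_{-a}}]=e^{-\Phi(q)2a}\Wq(2a)/\Phi'(q)$ is indeed nonzero, and I checked that substituting $u_q(w)=\Phi'(q)e^{-\Phi(q)w}-\Wq(-w)$ reproduces the stated $v_q$ and $h_q=1-A_q-B_q$ exactly (your $A_q$ and $B_q$ come out as the expressions the paper only derives much later, in Lemma~\ref{lemma:to-laplace}, so your argument proves that lemma en passant). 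The two steps you lean on that the paper's route avoids at this stage --- that $X_\rho\in\set$ a.s.\ on $\{\rho<\infty\}$ (continuous entry into $\set$) and the hitting-point formula for $\PP^z[e^{-qT_c}]$ --- are both available: the former is cited in the introduction via Millar, the latter is already invoked in Remark~3.1. What your approach buys is uniformity (no case analysis until the final simplification, absolute continuity for free, and an evident generalization to an $N$-point set as an $N\times N$ linear system, where the paper's recursive Remark~\ref{remark:missing}-adjacent construction grows unwieldy); what the paper's approach buys is that it produces the intermediate two-sided exit quantities directly without passing through $u_q$, and it is self-contained in the exit identities of \cite[Chapter~8]{kyprianou}.
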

\begin{proof}
Some preliminary observations. 

First, whenever $\{c,b\}\subset \mathbb{R}$, $c< b$, $z\in [c,b]$, then the $q$-potential measure of $X$ killed on exiting $[c,b]$, when starting from $z$, admits a density $u^{(q)}(c,b,z,y)$ (in $y\in \mathbb{R}$, with respect to Lebesgue measure), which is given by 
\begin{equation}\label{eq:help:zero-o}
u^{(q)}(c,b,z,y)=\mathbbm{1}_{[c,b]}(y)\left(\frac{\Wq(z-c)\Wq(b-y)}{\Wq(b-c)}-\Wq(z-y)\right)
\end{equation}
 \cite[Theorem~8.7]{kyprianou}. As a consequence, by monotone convergence and the asymptotic behavior of $\Wq$ at infinity  \cite[Lemma~3.3]{kkr}, letting $b\uparrow \infty$, respectively $c\downarrow -\infty$, the $q$-potential measure of $X$ killed on exiting $[c,\infty)$, respectively hitting $b$, when starting from $z\in [c,\infty)$, respectively $z\in (-\infty,b]$, is absolutely continuous with respect to Lebesgue measure, and has the density 
\begin{equation}\label{eq:help:zero-i}
\mathbb{R}\ni y\mapsto \mathbbm{1}_{[c,\infty)}(y)\left(\Wq(z-c)e^{\Phi(q)(c-y)}-\Wq(z-y)\right),
\end{equation} respectively 
\begin{equation}\label{eq:help:zero-ii}
\mathbb{R}\ni y\mapsto \mathbbm{1}_{(-\infty,b]}(y)\left(\Wq(b-y)e^{\Phi(q)(z-b)}-\Wq(z-y)\right).
\end{equation} 

Second, for any $t\in (0,\infty)$ and $\{c,d\}\subset \mathbb{R}$, 
\begin{equation}\label{eq:help:minor}
\PP^d(\tau^-_c=t)=0
\end{equation}
 (for instance since $\{\tau^-_c=t\}\subset \{X\text{ jumps at }t\}\cup \{X_t=c\}$, the first event being negligible by the stochastic continuity of L\'evy processes, the latter by \cite[Theorem~27.4]{sato} under our standing Assumption~\ref{assumption}).

Third, as follows at once by optional sampling of the martingale $(e^{-qt+\Phi(q)X_t})_{t\in [0,\infty)}$ and from \cite[Eq.~(8.11)]{kyprianou}, for all $b\in (0,\infty)$ and $z\in (-\infty,b]$, 
\begin{equation}\label{eq:help:one:i}
\PP^z\left[e^{-q \tau_0^-+\Phi(q) X_{\tau_0^-}};\tau_0^-<\tau_b^+\right]=e^{\Phi(q)z}-\frac{\Wq(z)}{\Wq(b)}e^{\Phi(q)b}.
\end{equation} 
In particular, letting $b\uparrow \infty$, using again the asymptotics of $\Wq$ \cite[Lemma~3.3]{kkr}, for all $z\in \mathbb{R}$,
\begin{equation}\label{eq:help:one:ii}
\PP^z\left[e^{-q \tau_0^-+\Phi(q) X_{\tau_0^-}};\tau_0^-<\infty\right]=e^{\Phi(q)z}-\Wq(z)\Phi'(q)^{-1}.
\end{equation} 

Finally, we observe that, thanks to $(\Wq(X)e^{-q\cdot})^{\tau_0^-\land \tau^+_b}$ being a bounded martingale for each $b\in (0,\infty)$ \cite[Exercise~8.12]{kyprianou}, so, by optional stopping, is $(\Wq(X)e^{-q\cdot})^{\tau_c^-\land \tau^+_b}$ for any $c\in [0,\infty)$. In particular, for any $z\in  (-\infty,b]$, $\Wq(z)=\PP^z[\Wq(X_{\tau_c^-})e^{-q\tau_c^-};\tau_c^-<\tau_b^+]+\Wq(b)\PP^z[e^{-q\tau_b^+};\tau_b^+<\tau_c^-]$; hence  by \cite[Eq.~(8.11)]{kyprianou}, for $c<b$,
\begin{equation}\label{eq:help:two:i}
\PP^z[\Wq(X_{\tau_c^-})e^{-q\tau_c^-};\tau_c^-<\tau_b^+]=\Wq(z)-\frac{\Wq(z-c)}{\Wq(b-c)}\Wq(b).
\end{equation}
In particular, passing to the limit $b\uparrow\infty$, using yet again the asymptotic behavior of $\Wq$  \cite[Lemma~3.3]{kkr}, allows to conclude that for $z\in \mathbb{R}$, 
\begin{equation}\label{eq:help:two:ii}
\PP^z[\Wq(X_{\tau_c^-})e^{-q\tau_c^-};\tau_c^-<\infty]=\Wq(z)-\Wq(z-c)e^{\Phi(q)c}.
\end{equation}
Take now an arbitrary $\mathcal{B}_\mathbb{R}/\mathcal{B}_{[0,\infty]}$-measurable $f$. 

Suppose first $x\in (-\infty,-a]$. Then for any $t\in (0,\infty)$, $\PP^x[f(X_t);t<\rho]=\PP^x[f(X_t);t<\tau^+_{-a}]$, and the claim follows upon integrating against $\mathbbm{1}_{(0,\infty)}(t)e^{-qt}dt$ and taking into account \eqref{eq:help:zero-ii}. 

Suppose now $x\in [-a,a]$. Then for any $t\in (0,\infty)$, $\PP^x[f(X_t);t<\rho]=\PP^x[f(X_t);t<\tau^+_a\land T_{-a}]=\PP^x[f(X_t);t<\tau_{-a}^-\land\tau_a^+]+\PP^x[f(X_t);\tau^-_{-a}\leq t<\tau_a^+\land T_{-a}]=\PP^x[f(X_t);t<\tau_{-a}^-\land\tau_a^+]+\PP^x\left[\PP^{X_{\tau^-_{-a}}}\left[f(X_{t-s});t-s<\tau^+_{-a}\right]\vert_{s=\tau^-_{-a}};\tau^-_{-a}<t\land \tau_a^+\right]$, where we have used \eqref{eq:help:minor} in the third equality. Now integrate against $\mathbbm{1}_{(0,\infty)}(t)e^{-qt}dt$. One obtains that $v_q(x,y)$ is equal to the sum of $u^{(q)}(-a,a,x,y)$ from \eqref{eq:help:zero-o} and of the relevant kernel that will appear in 
$$\int_0^\infty e^{-qt}\PP^x\left[\PP^{X_{\tau^-_{-a}}}\left[f(X_{t-s});t-s<\tau^+_{-a}\right]\big\vert_{s=\tau^-_{-a}};\tau^-_{-a}<t\land \tau_a^+\right]dt$$
$$=\PP^x\left[\int_0^\infty e^{-qt}\PP^{X_{\tau^-_{-a}}}\left[f(X_{t-s});t-s<\tau^+_{-a}\right]\mathbbm{1}(s<t\land \tau_a^+)dt\big \vert_{s=\tau^-_{-a}}\right]$$
$$=\PP^x\left[e^{-qs}\int_s^\infty e^{-q(t-s)}\PP^{X_{\tau^-_{-a}}}\left[f(X_{t-s});t-s<\tau^+_{-a}\right]dt\mathbbm{1}(s<\tau_a^+)\vert_{s=\tau^-_{-a}}\right]$$
$$=\PP^x\left[\int_0^\infty e^{-qt}\PP^{X_{\tau^-_{-a}}}\left[f(X_{t});t<\tau^+_{-a}\right]dte^{-q\tau^-_{-a}};\tau_{-a}^-<\tau_a^+\right]$$
$$=\PP^x\left[\int_{-\infty}^{-a}\left(\Wq(-a-y)e^{\Phi(q)(X_{\tau^-_{-a}}+a)}-\Wq(X_{\tau^-_{-a}}-y)\right)dye^{-q\tau^-_{-a}};\tau_{-a}^-<\tau_a^+\right].$$ It remains to identify, for $y\in (-\infty,-a)$, $$\PP^x\left[\left(\Wq(-a-y)e^{\Phi(q)(X_{\tau^-_{-a}}+a)}-\Wq(X_{\tau^-_{-a}}-y)\right)e^{-q\tau^-_{-a}};\tau_{-a}^-<\tau_a^+\right].$$
But, by \eqref{eq:help:one:i}, $\PP^x\left[e^{\Phi(q)(X_{\tau^-_{-a}}+a)-q\tau^-_{-a}};\tau_{-a}^-<\tau_a^+\right]=\PP^{x+a}\left[e^{\Phi(q)(X_{\tau^-_0})-q\tau^-_0};\tau_0^-<\tau_{2a}^+\right]=e^{\Phi(q)(x+a)}-\frac{\Wq(x+a)}{\Wq(2a)}e^{\Phi(q)2a}$, whilst by \eqref{eq:help:two:i}, $\PP^x\left[\Wq(X_{\tau^-_{-a}}-y)e^{-q\tau^-_{-a}};\tau_{-a}^-<\tau_a^+\right]=\PP^{x-y}\left(\Wq(X_{\tau^-_{-a-y}})e^{-q\tau^-_{-a-y}};\tau_{-a-y}^-<\tau_{a-y}^+\right)=\Wq(x-y)-\frac{\Wq(x+a)}{\Wq(2a)}\Wq(a-y)$. The desired expression follows.

Finally suppose $x\in [a,\infty)$.  Then for any $t\in (0,\infty)$, $\PP^x[f(X_t);t<\rho]=\PP^x[f(X_t);t<\tau_a^-]+\PP^x[f(X_t);\tau_a^-\leq t<\rho]=\PP^x[f(X_t);t<\tau_a^-]+\PP^x\left[\PP^{X_{\tau^-_a}}\left[f(X_{t-s});t-s<\rho\right]\vert_{s=\tau_a^-};\tau_a^-<t\right].$ Integrating against $\mathbbm{1}_{(0,\infty)}(t)e^{-qt}dt$ we find that $v_q(x,y)$ will be the sum of an expression given by \eqref{eq:help:zero-i} and of the relevant kernel that will appear in 

$$\int_0^\infty e^{-qt}\PP^x\left[\PP^{X_{\tau^-_a}}\left[f(X_{t-s});t-s<\rho\right]\vert_{s=\tau_a^-};\tau_a^-<t\right]dt$$
$$=\PP^x\left[\int_0^\infty e^{-qt}\PP^{X_{\tau^-_a}}\left[f(X_{t-s});t-s<\rho\right]\mathbbm{1}_{(s,\infty)}(t)dt\vert_{s=\tau_a^-}\right]$$
$$=\PP^x\left[e^{-q\tau_a^-}\int_0^\infty e^{-qt}\PP^{X_{\tau^-_a}}\left[f(X_{t});t<\rho\right]dt;\tau_a^-<\infty\right]$$
$$=\PP^x\Bigg[e^{-q\tau_a^-}\int_{-\infty}^\infty f(y)\Bigg[\mathbbm{1}_{(-\infty,a)}(y)\left(\frac{\Wq(X_{\tau^-_a}+a)\Wq(a-y)}{\Wq(2a)}-\Wq(X_{\tau^-_a}-y)\right)$$
$$+\mathbbm{1}_{(-\infty,-a)}(y)\Wq(-a-y)\left(e^{\Phi(q)(X_{\tau^-_a}+a)}-\frac{\Wq(X_{\tau^-_a}+a)}{\Wq(2a)}e^{\Phi(q)2a}\right)\Bigg]dy;\tau_a^-<\infty\Bigg].$$

It remains to identify, as follows. By \eqref{eq:help:two:ii}: $\PP^x[\Wq(X_{\tau^-_a}+a)e^{-q\tau_a^-};\tau_a^-<\infty]=\PP^{x+a}[\Wq(X_{\tau^-_{2a}})e^{-q\tau_{2a}^-};\tau_{2a}^-<\infty]=\Wq(x+a)-\Wq(x-a)e^{\Phi(q)2a}$; for $y\in (-\infty,a)$, also by \eqref{eq:help:two:ii}: $\PP^x[\Wq(X_{\tau^-_a}-y)e^{-q\tau_a^-};\tau_a^-<\infty]=\PP^{x-y}[\Wq(X_{\tau^-_{a-y}})e^{-q\tau^-_{a-y}};\tau_{a-y}^-<\infty]=\Wq(x-y)-\Wq(x-a)e^{\Phi(q)(a-y)}$; by  \eqref{eq:help:one:ii}: $\PP^x\left[e^{\Phi(q)(X_{\tau^-_a}+a)-q\tau^-_a};\tau_a^-<\infty\right]=\PP^{x-a}\left[e^{(X_{\tau^-_0}+2a)\Phi(q)-q\tau^-_0};\tau_0^-<\infty\right]=e^{\Phi(q)(x+a)}-\Wq(x-a)\Phi'(q)^{-1}e^{\Phi(q)2a}$.

Next, the second part of the proposition could  be got by integrating the resolvent densities of the killed process just obtained (because $qV_q(x,\mathbbm{1})=\PP^x(e_q<\rho)$), but it seems easier to do the computations directly. Indeed, we have $h_q(x)=\PP^x(\rho>e_q)=\PP^x[1-e^{-q \rho}]$. Then we compute, as follows. (Since the methods are similar to the ones employed in obtaining the resolvent densities, we omit making explicit some of the details.)

For $x\in (-\infty,-a]$, $\PP^x [e^{-q \rho}]=\PP^x [e^{-q \tau_{-a}^+}]=e^{-\Phi(q)(-a-x)}$.   

Then, for $x\in [-a,a]$, \footnotesize $$\PP^x[ e^{-q \rho}]=\PP^x[e^{-q \tau_a^+};\tau_a^+<\tau_{-a}^-]+\PP^x[e^{-q \rho};\tau_{-a}^-<\tau_a^+]=\frac{\Wq(a+x)}{\Wq(2a)}+\PP^x[\PP_{X_{\tau_{-a}^-}}[e^{-q \tau_{-a}^+}]e^{-q \tau_{-a}^-};\tau_{-a}^-<\tau_a^+].$$\normalsize
But the second term in the preceding sum is equal to $\PP^x\left[e^{-\Phi(q)(-a-X_{\tau_{-a}^-})-q\tau_{-a}^-};\tau_{-a}^-<\tau_a^+\right]=e^{\Phi(q)a}\left(e^{x\Phi(q)}-\frac{\Wq(x+a)}{\Wq(2a)}e^{\Phi(q)a}\right)$.

Finally, for $x\in [a,\infty)$, 
$$\PP^x[e^{-q \rho}]=\PP^x[e^{-q \tau_a^-}(e^{-q \rho})\circ\theta_{\tau_a^-}; \tau_a^-<\infty]=\PP^x[e^{-q \tau_a^-}\PP_{X_{\tau_a^-}}[e^{-q \rho}];\tau_a^-<\infty]$$
$$=\PP^x\left[e^{-q \tau_a^-}\left[\frac{\Wq(a+X_{\tau_a^-})}{\Wq(2a)}+e^{\Phi(q)a}\left(e^{\Phi(q)X_{\tau_a^-}}-\frac{\Wq(X_{\tau_a^-}+a)}{\Wq(2a)}e^{\Phi(q)a}\right)\right];\tau_a^-<\infty\right]$$
$$=e^{\Phi(q)2a}\left(e^{\Phi(q)(x-a)}-\Wq(x-a)\Phi'(q)^{-1}\right)+\frac{1-e^{\Phi(q)2a}}{\Wq(2a)}\left(\Wq(x+a)-\Wq(x-a)e^{\Phi(q)2a}\right).$$
\end{proof}

\begin{remark}
More generally, if one has $N\in \mathbb{N}$ and real numbers $a_1<\cdots<a_{N+1}$, and if one denotes by $u_N^{(q)}(x,\cdot)$ the $q$-resolvent density of the process killed on hitting $\{a_1,\ldots,a_N\}$, and likewise  by $u_{N+1}^{(q)}(x,\cdot)$ the $q$-resolvent density of the process killed on hitting $\{a_1,\ldots,a_N,a_{N+1}\}$, then proceeding as in the previous proof, one has, for instance for $x\in (a_N ,a_{N+1}]$, $$u_{N+1}^{(q)}(x,y)=u^{(q)}(a_N,a_{N+1},x,y)+\PP^x[e^{-q\tau_{a_N}^-}u^{(q)}_N(X_{\tau_{a_N}^-}, y);\tau_{a_N}^-<\tau_{a_{N+1}}^+]$$ Lebesgue-a.e. in $y\in \mathbb{R}$. (of course $u_{N+1}^{(q)}(x,\cdot)=u_N^{(q)}(x,\cdot)$ for $x\in (-\infty,a_N]$; and the relevant recursion for $x\in [a_{N+1},\infty)$ would again follow similarly as in the proof above). However these expressions grow considerably in complexity with increasing $N$.
\end{remark}
Note now that, for $q\in (0,\infty)$ and $y\in \mathbb{R}\backslash V$, one has $\PP^y(e_q<\rho)=h_q(y)>0$. Thus, for each $q\in (0,\infty)$, we may define the Markov family of probabilities $(\PP_{\times,q}^y)_{y\in \mathbb{R}\backslash \set}$ on $\FF_\infty^0$ uniquely by insisting that $$\PP_{\times,q}^y(A\cap \{t<\zeta\}):=\frac{\PP^y(A \cap \{t<e_q< \rho\})}{\PP^y(e_q<\rho)},\quad A\in \FF^0_t,t\in [0,\infty),y\in \mathbb{R}\backslash \set.$$
This corresponds to $X$ killed at $e_q$ and conditioned not to hit $\set$.
\begin{remark}\label{remark:doob}
For $t\in [0,\infty)$ and $\FF_t^0/\mathcal{B}_{[0,\infty]}$-measurable $F$, by the Markov property of $X$, $\PP^y_{\times,q}[F;t<\zeta]=\frac{\PP^y[F;t<e_q<\rho]}{\PP^y(\rho>e_q)}=\frac{\PP^y[Fe^{-qt}(1-e^{-q\rho})\circ \theta_t;t<\rho]}{\PP^y(\rho>e_q)}=e^{-qt}\frac{\PP^y[F\PP^{X_t}(e_q<\rho);t<\rho]}{h_q(y)}=\frac{\PP^y[Fh_q(X_t);t<\rho]}{h_q(y)}e^{-q t}=\frac{\PP^y[Fh_q(X_t);t<\rho\land e_q]}{h_q(y)}$. Thus the probabilities $\PP^y_{\times,q}$ may also be seen as having been got by a Doob h-transform via the excessive function $h_q$.
\end{remark}

The next result identifies the resolvent of the process $X$, killed at, and conditioned to avoid $\set$ up to an independent exponential random time.

\begin{theorem}\label{theorem:conditioned}
Let $q\in (0,\infty)$, $\beta\in (0,\infty)$, $g:\mathbb{R}\to \mathbb{R}$ continuous and compactly supported in $\mathbb{R}\backslash \set$. Set $Z_{q,\beta}(x,g):=\int_0^\infty e^{-\beta t}\PP^x_{\times,q}[g(X_t);t<\zeta]dt$ for $x\in \mathbb{R}\backslash \set$. One has the convergence $Z_{q,\beta}(x,g)\to \int_{-\infty}^\infty h_q(y)z_{q,\beta}(y)g(y)dy$ as $x\uparrow\downarrow \pm a$, with $z_{q,\beta}(y)$ given as follows in the respective instances. (It is part of the statement that the denominators are not zero.)
\begin{enumerate}[(i)]
\item\label{conditioned:i} As $x\uparrow -a$: 
$z_{q,\beta}(y)=\frac{(W^{(\beta+q)})'(-a-y)-\Phi(\beta+q)W^{(\beta+q)}(-a-y)}{\Phi(q)}$.
\item\label{conditioned:ii} As $x\downarrow -a$: 
$z_{q,\beta}(y)=\frac{\frac{W^{(\beta+q)}(a-y)-W^{(\beta+q)}(-a-y)e^{\Phi(q)2a}}{ W^{(\beta+q)}(2a)}-\frac{\diffusion}{2}\left((W^{(\beta+q)})'(-a-y)-\Phi(\beta+q)W^{(\beta+q)}(-a-y)\right)}{\frac{e^{\Phi(q)2a}-1}{\Wq(2a)}-\frac{\diffusion}{2}\Phi(q)}$.
\item\label{conditioned:iii} As $x\uparrow a$: 
\footnotesize
$$z_{q,\beta}(y)=\frac{(W^{(\beta+q)})'(a-y)-W^{(\beta+q)}(a-y)\frac{(W^{(\beta+q)})'(2a)}{(W^{(\beta+q)})(2a)}+W^{(\beta+q)}(-a-y)e^{\Phi(\beta+q)2a}(\frac{(W^{(\beta+q)})'(2a)}{W^{(\beta+q)}(2a)}-\Phi(\beta+q))}{\Phi(q)e^{\Phi(q)2a}-\frac{e^{\Phi(q)2a}-1}{\Wq(2a)}(\Wq)'(2a)}.$$ 
\normalsize
\item\label{conditioned:iv} As $x\downarrow a$: 
\footnotesize
$$z_{q,\beta}(y)=\frac{e^{\Phi(\beta+q)(a-y)}-\frac{W^{(\beta+q)}(a-y)}{W^{(\beta+q)}(2a)}e^{\Phi(\beta+q)2a}-W^{(\beta+q)}(-a-y)e^{\Phi(\beta+q)2a}\left(\Phi'(\beta+q)^{-1}-\frac{e^{\Phi(\beta+q)2a}}{W^{(\beta+q)}(2a)}\right)}{e^{\Phi(q)2a}\left(\Phi'(q)^{-1}-\frac{e^{\Phi(q)2a}-1}{\Wq(2a)}\right)-\frac{\diffusion}{2}\left(\Phi(q)e^{\Phi(q)2a}-\frac{e^{\Phi(q)2a}-1}{\Wq(2a)}(\Wq)'(2a)\right)}$$
$$-\frac{\diffusion}{2}\frac{(W^{(\beta+q)})'(a-y)-W^{(\beta+q)}(a-y)\frac{(W^{(\beta+q)})'(2a)}{W^{(\beta+q)}(2a)}-W^{(\beta+q)}(-a-y)e^{\Phi(\beta+q)2a}\left(\frac{(W^{(\beta+q)})'(2a)}{W^{(\beta+q)}(2a)}+\Phi(\beta+q)\right)}{e^{\Phi(q)2a}\left(\Phi'(q)^{-1}-\frac{e^{\Phi(q)2a}-1}{\Wq(2a)}\right)-\frac{\diffusion}{2}\left(\Phi(q)e^{\Phi(q)2a}-\frac{e^{\Phi(q)2a}-1}{\Wq(2a)}(\Wq)'(2a)\right)}.$$
\normalsize
\end{enumerate}
\begin{definition}\label{definition}
The limits from Theorem~\ref{theorem:conditioned}, as $x\downarrow a$, $x\uparrow a$, $x\downarrow -a$ and $x\uparrow -a$, of $Z_{q,\beta}(x,g)$, are denoted $Z_{q,\beta}^+(a,g)$, $Z_{g,\beta}^-(a,g)$, $Z_{q,\beta}^+(-a,g)$ and $Z_{g,\beta}^-(-a,g)$, respectively. 
\end{definition}
\end{theorem}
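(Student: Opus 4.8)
The plan is to reduce the whole statement to the explicit quantities $v_{\beta+q}$ and $h_q$ furnished by Proposition~\ref{proposition:killed}, and then to extract the leading-order behaviour of a $0/0$ expression as $x$ approaches $\pm a$. First, feeding $F=g(X_t)$ into the $h$-transform identity of Remark~\ref{remark:doob} and integrating $e^{-\beta t}\,dt$, one obtains, for $x\in\mathbb{R}\backslash\set$,
$$Z_{q,\beta}(x,g)=\frac{1}{h_q(x)}\int_0^\infty e^{-(\beta+q)t}\PP^x[(gh_q)(X_t);t<\rho]\,dt=\frac{1}{h_q(x)}\int_{-\infty}^\infty v_{\beta+q}(x,y)\,g(y)\,h_q(y)\,dy.$$
Thus it suffices to show that $v_{\beta+q}(x,\cdot)/h_q(x)\to z_{q,\beta}(\cdot)$ in a sense strong enough to pass to the limit under the integral sign. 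Since the process started inside $\set$ is instantly absorbed, $h_q(\pm a)=0$ and $v_{\beta+q}(\pm a,\cdot)=0$ Lebesgue-a.e. (both facts being immediate already from the formulae of Proposition~\ref{proposition:killed}), so that numerator and denominator both vanish; the task is to identify the two leading coefficients.

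Second, I would treat each of the four approaches separately, writing $\epsilon:=\vert x-(\pm a)\vert\downarrow 0$ and inserting the matching branch of Proposition~\ref{proposition:killed}: the $(-\infty,-a]$-branch for (i), the $(-\infty,a]$-branch for (ii) and (iii), and the general branch for (iv). In each case one expands $h_q(x)$ and $v_{\beta+q}(x,y)$ to first order in $\epsilon$. The governing observation is that for every $y$ in the compact, $\set$-avoiding support $K$ of $g$ the scale-function arguments $a-y$ and $-a-y$ are bounded away from $0$, as are the ``bulk'' arguments $2a\mp\epsilon$; at such arguments one simply Taylor expands using the $C^1$-smoothness of $\Wq$ on $(0,\infty)$. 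The only delicate terms are those whose argument itself tends to $0$, namely $\Wq(x+a)=\Wq(\epsilon)$ in case (ii) and $\Wq(x-a)=\Wq(\epsilon)$ in case (iv), together with their $W^{(\beta+q)}$-analogues. Cases (i) and (iii) carry no such near-zero scale-function term, so there $h_q(x)$ and $v_{\beta+q}(x,y)$ are both of exact order $\epsilon$ (for instance in (i), $h_q(x)=1-e^{\Phi(q)(a+x)}\sim\Phi(q)\epsilon$), the factor $\epsilon$ cancels, and the quotients (i),(iii) --- free of $\diffusion$ --- are read off at once.

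Third, and this is the step requiring the most care, in cases (ii) and (iv) the near-zero terms decide the answer, and here I would invoke the boundary behaviour of the scale function of an unbounded-variation spectrally negative L\'evy process: $\Wq(0+)=0$, $(\Wq)'(0+)=2/\diffusion$ when $\diffusion>0$, while $\Wq(\epsilon)/\epsilon\to\infty$ when $\diffusion=0$; moreover $W^{(\beta+q)}(\epsilon)/\Wq(\epsilon)\to 1$ as $\epsilon\downarrow 0$, the leading near-zero behaviour of $W^{(\cdot)}$ being independent of the spectral parameter. If $\diffusion>0$ then $\Wq(\epsilon)\sim(2/\diffusion)\epsilon$, so both $h_q(x)$ and $v_{\beta+q}(x,y)$ are again of exact order $\epsilon$; the common factor $2/\diffusion$ cancels between numerator and denominator, while the genuinely $O(\epsilon)$ ``smooth'' contributions survive weighted by $\tfrac{\diffusion}{2}$, producing exactly the $\tfrac{\diffusion}{2}$-terms displayed in (ii) and (iv). If $\diffusion=0$ then the near-zero term dominates the $O(\epsilon)$ contributions, so $h_q(x)$ and $v_{\beta+q}(x,y)$ are of the larger common order $\Wq(\epsilon)$; dividing through and using $W^{(\beta+q)}(\epsilon)/\Wq(\epsilon)\to 1$ leaves exactly the same formulae with the $\tfrac{\diffusion}{2}$-terms annihilated. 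Hence the single displayed expression in each of (ii) and (iv) simultaneously captures both regimes, which is the crux of the proof.

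Finally, two routine matters close the argument. To pass the limit under the integral I would use dominated convergence on $K$: all scale-function arguments there are bounded away from $0$, the first-order difference quotients of $v_{\beta+q}(x,\cdot)$ are uniformly bounded in $x$ near $\pm a$ by the mean value theorem, and $0\le h_q\le 1$, so that in fact $v_{\beta+q}(x,\cdot)/h_q(x)\to z_{q,\beta}(\cdot)$ uniformly on $K$. There remains the assertion, part of the statement, that the denominators do not vanish; this is checked by direct inspection of the explicit leading coefficients of $h_q(x)$, using $\Phi(q)>0$, $e^{\Phi(q)2a}>1$ and $\Wq(2a)>0$ together with the elementary monotonicity of $\Wq$, and is in any case forced by the strict positivity of $h_q$ off $\set$. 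The four limits so obtained are then named $Z^+_{q,\beta}(a,g)$, $Z^-_{q,\beta}(a,g)$, $Z^+_{q,\beta}(-a,g)$ and $Z^-_{q,\beta}(-a,g)$ as in Definition~\ref{definition}.
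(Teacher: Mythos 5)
Your reduction of the theorem to the explicit formulae of Proposition~\ref{proposition:killed} via Remark~\ref{remark:doob}, and your subsequent leading-order analysis of the resulting $0/0$ quotient (splitting off the near-zero scale-function arguments, using $(\Wq)'(0+)=2/\diffusion$ and $\lim_{z\downarrow 0}W^{(q_1)}(z)/W^{(q_2)}(z)=1$, and passing to the limit by bounded convergence on the compact support of $g$), is essentially the route the paper takes, and that part of your argument is sound.

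The genuine gap is in your treatment of the assertion, explicitly part of the statement, that the denominators do not vanish. Neither of your two justifications works. First, positivity of $h_q$ off $\set$ does not force the denominators to be nonzero: the denominator in each case is precisely the limit $H(q)$ of the \emph{normalized} ratio $h_q(x)/\Wq(x\mp a)$ (resp.\ $h_q(x)/|x\mp a|$), and since $h_q(x)\to 0$ as $x\to\pm a$ this limit could a priori be $0$ even though $h_q>0$ off $\set$; strict positivity of $h_q$ gives only $H(q)\ge 0$. Second, ``direct inspection'' of the explicit expressions fails in cases \ref{conditioned:ii}--\ref{conditioned:iv}, where each denominator is a \emph{difference} of positive quantities (e.g.\ $\frac{e^{\Phi(q)2a}-1}{\Wq(2a)}-\frac{\diffusion}{2}\Phi(q)$ in \ref{conditioned:ii}, or $\Phi(q)e^{\Phi(q)2a}-\frac{e^{\Phi(q)2a}-1}{\Wq(2a)}(\Wq)'(2a)$ in \ref{conditioned:iii}) whose sign is not decided by $\Phi(q)>0$, $e^{\Phi(q)2a}>1$, $\Wq(2a)>0$ and monotonicity of $\Wq$. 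What is actually needed --- and what the paper supplies --- is a probabilistic lower bound on $h_q$ near $\pm a$: one bounds $h_q(x)$ from below by $\PP^x(\tau_{-a}^-\wedge\tau_a^+>e_q)$ (resp.\ $\PP^x(\tau_a^->e_q)$) and uses the two-sided (resp.\ one-sided) exit identities to get $\lim_{x\downarrow -a}h_q(x)/\Wq(x+a)\ge q\Iq(2a)/\Wq(2a)>0$ in case \ref{conditioned:ii} and $\lim_{x\downarrow a}h_q(x)/\Wq(x-a)\ge q/\Phi(q)>0$ in case \ref{conditioned:iv}. Case \ref{conditioned:iii} is more delicate still: the lower bound there is $\frac{q}{\Wq(2a)}\bigl(\Wq(2a)^2-\Iq(2a)(\Wq)'(2a)\bigr)$, and showing this is nonzero requires an extra argument (the quantity is nondecreasing in $a$, so if it vanished at some $a_0/2$ it would vanish on $(0,a_0/2)$, forcing $(\ln\Wq)'=(\ln\Iq)'$ and hence $\Wq=C\Iq$ on $(0,a_0)$, which contradicts $\Wq(0)=0$). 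Without something of this kind your proof does not establish the parenthetical claim in the statement, and in cases \ref{conditioned:ii}--\ref{conditioned:iv} the claimed limits are not even well defined.
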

\begin{remark}\label{remark:conditioned}
The value of the denominator in the cases \ref{conditioned:ii}-\ref{conditioned:iv} (respectively, \ref{conditioned:i}-\ref{conditioned:iii}) is equal to $H(q):=\lim_{x\downarrow \pm a} h_q(x)/\Wq(x\mp a)$  (respectively, $H(q):=-\lim_{x\uparrow \pm a} h_q(x)/(x\mp a)$) --- see the proof. The map $(0,\infty)\ni q\mapsto h_q(x)$ being nondecreasing, and $\lim_{z\downarrow 0}\frac{W^{(q_1)}(z)}{W^{(q_2)}(z)}=1$ whenever $\{q_1,q_2\}\subset [0,\infty)$, it follows that the map $(0,\infty)\ni q\mapsto H(q)$ is nondecreasing. 
\end{remark}
\begin{proof}
In all cases, by Remark~\ref{remark:doob}, $Z_{q,\beta}(x,g)=\frac{V_{\beta+q}(x,gh_q)}{h_q(x)}=\int_{-\infty}^\infty v_{\beta+q}(x,y)g(y)h_q(y)dy/h_q(x)$. Now apply Proposition~\ref{proposition:killed}, so that, for instance for $x\in (-\infty,-a)$, one has $Z_{q,\beta}(x,g)=\int_{-\infty}^{\infty}g(y)(1-e^{\Phi(q)(a+y)})\frac{\left(W^{(\beta+q)}(-a-y)e^{\Phi(\beta+q)(x+a)}-W^{(\beta+q)}(x-y)\right)}{1-e^{\Phi(q)(a+x)}}dy$ with similar (but more involved) expressions in the remaining instances. 

It remains to pass to the relevant limits via  bounded convergence, using the assumption that $g$ is compactly supported in $\mathbb{R}\backslash V$, and the following facts: (a) the scale functions are $C^1$ on $\mathbb{R}\backslash \{0\}$, (b) $(\Wq)'(0+)=2/\diffusion$ ($=\infty$ when $\diffusion=0$) \cite[Lemma~3.2]{kkr}, (c) $\lim_{z\downarrow 0}\frac{W^{(q_1)}(z)}{W^{(q_2)}(z)}=1$ whenever $\{q_1,q_2\}\subset [0,\infty)$ (the latter follows from \cite[Eq.~(56)]{kkr}), and (d) the observation that $H(q)$ (see Remark~\ref{remark:conditioned} for the introduction of $H$) is non-vanishing, which we now verify.

In case \ref{conditioned:i}, $H(q)$ is clearly non-zero, it being equal to $\Phi(q)>0$. 

In case \ref{conditioned:ii}, one notes that by \cite[Theorem~8.1(iii)]{kyprianou}, for $x\in [-a,a]$,  $h_q(x)\geq \PP^x(\tau_{-a}^-\land \tau_a^+>e_q)=1-\PP^x[e^{-q(\tau_{-a}^-\land\tau_a^+)}]=q\Iq(2a)\frac{\Wq(x+a)}{\Wq(2a)}-q\Iq(x+a)$, where
 $\Iq(z):=\int_0^z\Wq(y)dy$ for $z\in \mathbb{R}$; 
hence from the fundamental theorem of calculus and from $(\Wq)'(0+)=2/\diffusion$, one sees that  $\lim_{x\downarrow -a}\frac{h_q(x)}{\Wq(x+a)}\geq \frac{q\Iq(2a)}{\Wq(2a)}-q\frac{ \diffusion}{2}\Wq(0)=\frac{q\Iq(2a)}{\Wq(2a)}>0$. 

By the same token, for case \ref{conditioned:iii}, $\lim_{x\uparrow a}\frac{h_q(x)}{a-x}\geq \lim_{x\uparrow a}\frac{\PP^x(\tau_{-a}^-\land \tau_a^+>e_q)}{a-x}=\frac{q}{\Wq(2a)}\left(\Wq(2a)^2-\Iq(2a)(\Wq)'(2a)\right)$, where we note that the latter expression is nondecreasing in $a$. Suppose for some given $a=a_0/2$ and $q$ it vanishes. Then for the given $q$ it vanishes for all $a\in (0,a_0/2)$. Hence on $(0,a_0)$ we would have identically $(\Wq)^2=(\Wq)'\int_0^\cdot  \Wq$, viz. $(\ln \Wq)'=(\ln(\int_0^\cdot \Wq))'$, and so by the fundamental theorem of calculus, for some $C\in (0,\infty)$,  $\Wq=C \int_0^\cdot \Wq$, which easily leads to a contradiction. 

Finally, for case \ref{conditioned:iv}, one has from \cite[Theorem~8.1(ii)]{kyprianou} that for $x\in [a,\infty)$, $h_q(x)\geq \PP^x(\tau_a^->e_q)=\frac{q}{\Phi(q)}\Wq(x-a)-q\Iq(x-a)$; hence $\lim_{x\downarrow a}\frac{h_q(x)}{\Wq(x-a)} \geq \frac{q}{\Phi(q)}-\frac{\diffusion}{2}q\Wq(0)=\frac{q}{\Phi(q)}>0$. 
\end{proof}

\section{Laplace transforms of entrance laws}\label{section:entrance-laws}
We will need the following excursion-theoretic lemma.
\begin{lemma}\label{lemma}
Let $q\in (0,\infty)$. Then: 
\begin{enumerate}[(i)]
\item\label{lemma:i} $\hat \PP^a[ 1-e^{-q \rho}]=\frac{\alpha_a(q)}{\alpha_a(1)}\frac{\Phi'(1)}{\Phi'(q)}=\frac{e^{\Phi(q)2a}}{e^{\Phi(1)2a}}\frac{\Phi'(q)^{-1}-\frac{e^{\Phi(q)2a}-1}{\Wq(2a)}}{\Phi'(1)^{-1}-\frac{e^{\Phi(1)2a}-1}{W^{(1)}(2a)}}$ and  $\hat \PP^{-a}[1-e^{-q \rho}]=\frac{\alpha_{-a}(q)}{\alpha_{-1}(1)}\frac{\Phi'(1)}{\Phi'(q)}=\frac{(e^{\Phi(q)2a}-1)}{(e^{\Phi(1)2a}-1)}\frac{W^{(1)}(2a)}{\Wq(2a)}$.

\item \label{lemma:iii} $\hat \PP^{-a}[1-e^{-q\rho};\tau_{-a}^-=0]=\frac{W^{(1)}(2a)}{e^{\Phi(1)2a}-1}\frac{\diffusion}{2}\Phi(q)$.
\item \label{lemma:iv} $\hat{\PP}^{-a}[1-e^{-q\rho};\tau_{-a}^+=0]=\frac{W^{(1)}(2a)}{e^{\Phi(1)2a}-1}\left(\frac{e^{\Phi(q)2a}-1}{\Wq(2a)}-\frac{\diffusion}{2}\Phi(q)\right)$.
\end{enumerate}
\end{lemma}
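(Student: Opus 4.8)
The plan is to push all three identities back onto the single-point exit measures via the relations $\alpha_{\pm a}(1)\hat{\PP}^{\pm a}=\tilde{\PP}^{\pm a}$ and $\tilde{\PP}^0=\Phi'(1)\nn$ from Section~\ref{section:local}, after extracting a master identity by telescoping the exit system of $\set$. For \ref{lemma:i} I would feed $Z_s=e^{-qs}$ and $f=1-e^{-q\rho}$ into the master formula for $\set$. Since $L$ charges only the Lebesgue-null set $M$ and $f\circ\theta_s=1-e^{-q(r_s-s)}$ (with $r_s$ the right endpoint of the excursion interval with left endpoint $s$), the left-hand side telescopes: the contiguous intervals with strictly positive left endpoint exhaust $M^c\cap[0,\infty)$ apart from the initial gap $[0,\rho)$, so for every $x\notin\set$ it collapses to $\PP^x[e^{-q\rho}]$. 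On the right, $L=\alpha_a(1)L^a+\alpha_{-a}(1)L^{-a}$ separates the integral into the parts carried by $\{X_s=a\}$ and $\{X_s=-a\}$, and the Markov property of $L^{\pm a}$ together with spatial homogeneity gives $\PP^x[\int_0^\infty e^{-qs}dL^{\pm a}_s]=\PP^x[e^{-qT_{\pm a}}]\,U$ with $U:=\PP^0[\int_0^\infty e^{-qs}dL^0_s]=\Phi'(q)/\Phi'(1)$. (The value of $U$ is the reciprocal of $\tilde{\PP}^0[1-e^{-qT_0}]$, obtained from the single-point version of the same telescoping, and equals $u_q(0)/u_1(0)$ because the $q$-potential of $L^0$ is proportional to the resolvent density $u_q(-\cdot)$ with a $q$-independent constant fixed at $q=1$.) Hence, for all $x\notin\set$,
$$\PP^x[e^{-q\rho}]=\tfrac{\Phi'(q)}{\Phi'(1)}\bigl(c_a\,\PP^x[e^{-qT_a}]+c_{-a}\,\PP^x[e^{-qT_{-a}}]\bigr),\qquad c_{\pm a}:=\alpha_{\pm a}(1)\,\hat{\PP}^{\pm a}[1-e^{-q\rho}].$$

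I would then set $p:=\PP^x[e^{-qT_a};T_a<T_{-a}]$ and $m:=\PP^x[e^{-qT_{-a}};T_{-a}<T_a]$ and use the strong Markov property with $\PP^{-a}[e^{-qT_a}]=e^{-\Phi(q)2a}$ and $\PP^a[e^{-qT_{-a}}]=e^{\Phi(q)2a}-\Wq(2a)/\Phi'(q)$ (both in the Remark after~\eqref{eq:local-times}) to write $\PP^x[e^{-q\rho}]=p+m$, $\PP^x[e^{-qT_a}]=p+m\,e^{-\Phi(q)2a}$ and $\PP^x[e^{-qT_{-a}}]=m+p(e^{\Phi(q)2a}-\Wq(2a)/\Phi'(q))$. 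Substituting, and observing that $(p,m)$ is not confined to a line as $x$ runs through $\mathbb R\setminus\set$ (e.g. $p\equiv0$ for $x<-a$ but $p>0$ for $x\in(-a,a)$), the coefficients of $p$ and of $m$ must vanish separately; the resulting $2\times2$ system solves to $c_{\pm a}=\tfrac{\Phi'(1)}{\Phi'(q)}\alpha_{\pm a}(q)$, which is \ref{lemma:i} after dividing by $\alpha_{\pm a}(1)$ and inserting~\eqref{eq:local-times}.

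For \ref{lemma:iii} and \ref{lemma:iv}: under $\hat{\PP}^{-a}$ one has $X_0=-a$ and the excursion departs at once, so $\{\tau_{-a}^-=0\}$ and $\{\tau_{-a}^+=0\}$ partition $\Omega$ up to $\hat{\PP}^{-a}$-null sets; thus \ref{lemma:iv} equals \ref{lemma:i} minus \ref{lemma:iii}, and only \ref{lemma:iii} needs proof. On $\{\tau_{-a}^-=0\}$ the excursion lies in $(-\infty,-a)$ and regains $\set$ only by creeping up to $-a$, so $\rho=T_{-a}$ there; hence $\alpha_{-a}(1)\hat{\PP}^{-a}=\tilde{\PP}^{-a}$, spatial homogeneity and $\tilde{\PP}^0=\Phi'(1)\nn$ give $\hat{\PP}^{-a}[1-e^{-q\rho};\tau_{-a}^-=0]=\tfrac{\Phi'(1)}{\alpha_{-a}(1)}\,\nn[1-e^{-qT_0};\tau_0^-=0]$. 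A downward excursion enters $(-\infty,0)$ by creeping (an $\nn$-null event unless $\diffusion>0$) at rate $\tfrac{\diffusion}{2}$ and thereafter runs as $X$ from $0-$ until its return $T_0=\tau_0^+$; with $\PP^{-\epsilon}[e^{-q\tau_0^+}]=e^{-\Phi(q)\epsilon}$ this yields $\nn[1-e^{-qT_0};\tau_0^-=0]=\tfrac{\diffusion}{2}\lim_{\epsilon\downarrow0}\tfrac1\epsilon(1-e^{-\Phi(q)\epsilon})=\tfrac{\diffusion}{2}\Phi(q)$, and since $\Phi'(1)/\alpha_{-a}(1)=W^{(1)}(2a)/(e^{\Phi(1)2a}-1)$ this is precisely \ref{lemma:iii}.

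The one genuinely delicate ingredient is the creeping rate $\tfrac{\diffusion}{2}$, i.e. the normalization of the downward part of $\nn$, and this is what I expect to be the main obstacle. I would pin it down by an occupation cross-check run through the same telescoping as in \ref{lemma:i}, now with $f=\int_0^{T_0}e^{-qt}g(X_t)dt$ and $g$ supported in $(-\infty,0)$: this identifies the total $\nn$-occupation density as $u_q(\cdot)/\Phi'(q)$, of value $u_q(0)/\Phi'(q)=1$ at $0-$. As upward excursions reach $(-\infty,0)$ only across a downward jump, landing strictly below $0$, their occupation density vanishes at the absorbing level $0-$, so all of that unit mass is carried by the downward part, whose entrance-law density at $y<0$ is, by~\eqref{eq:help:zero-ii}, $\tfrac{\diffusion}{2}\lim_{\epsilon\downarrow0}\tfrac1\epsilon(\Wq(-y)e^{-\Phi(q)\epsilon}-\Wq(-\epsilon-y))=\tfrac{\diffusion}{2}((\Wq)'(-y)-\Phi(q)\Wq(-y))$; matching at $0-$ forces the constant through $(\Wq)'(0+)=2/\diffusion$. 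This is moreover consistent with the creeping identity $\PP^x[e^{-q\tau_0^-};X_{\tau_0^-}=0]=\tfrac{\diffusion}{2}((\Wq)'(x)-\Phi(q)\Wq(x))$ and with $\lim_{x\uparrow-a}h_q(x)/(x+a)=-\Phi(q)$ read off from Proposition~\ref{proposition:killed}.
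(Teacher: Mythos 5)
Your proof follows essentially the same route as the paper: part \ref{lemma:i} is obtained from the master formula with $Z_s=e^{-qs}$, $f=1-e^{-q\rho}$, the decomposition $L=\alpha_a(1)L^a+\alpha_{-a}(1)L^{-a}$ and $\PP^{\pm a}[\int_0^\infty e^{-qs}dL^{\pm a}_s]=\Phi'(q)/\Phi'(1)$, yielding exactly the same $2\times 2$ linear system (the paper gets it by starting the process at $\pm a$, you by starting at general $x\notin\set$ and invoking linear independence of $(p,m)$ --- same system, same solution); part \ref{lemma:iv} is \ref{lemma:i} minus \ref{lemma:iii} in both treatments. The only substantive divergence is in \ref{lemma:iii}: after reducing, as the paper does, to $\frac{\Phi'(1)}{\alpha_{-a}(1)}\,\nn[e_q<T_0,\tau_0^-=0]$ via $\alpha_{-a}(1)\hat\PP^{-a}=\tilde\PP^{-a}$ and spatial homogeneity, the paper simply cites \cite[Lemma~2(iv)]{ppr} for the value $\frac{\diffusion}{2}\Phi(q)$, whereas you re-derive it. Your occupation-density matching (total $\nn$-occupation density $u_q(\cdot)/\Phi'(q)$, equal to $1$ at $0-$, with the post-jump contribution vanishing there because $\Wq(0)=0$) is a genuinely nice way to fix the constant, but it rests on the unproved premise that the restriction of $\nn$ to $\{\tau_0^-=0\}$ has entrance law proportional to $\lim_{\epsilon\downarrow 0}\epsilon^{-1}\PP^{-\epsilon}(X_t\in dy,\,t<\tau_0^+)$; that identification is precisely the content of the cited lemma of \cite{ppr}, so in a self-contained write-up you would need to either prove it or cite it as the paper does. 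Everything else checks out.
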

\begin{remark}\label{remark:lemma}
Recall that when $\diffusion=0$ then $\hat \PP^{a}(\tau_a^-=0)=0$, and it is clear from the above that $\hat\PP^{-a}(\tau_{-a}^-=0)=0$. Missing, therefore, in this lemma are (tractable) expressions for $\hat \PP^{a}[1-e^{-q\rho};\tau_{ a}^\pm=0]$, when $\diffusion>0$,  which it seems are more difficult to procure (cf. Lemma~\ref{remark:missing}).
\end{remark}
\begin{proof}
\ref{lemma:i}. Taking $Z=e^{-q \cdot}$ and $f=1-e^{-q \rho}$ in the master formula, we obtain thanks to the regularity of $a$ and $-a$ for $V$, and  since the potential measures of $X$ are absolutely continuous with respect to Lebesgue measure with $V$ being a Lebesgue measure zero set,
\footnotesize $$1=\PP^{\pm a}\left[\int_0^\infty qe^{-qs}\mathbbm{1}_{\mathbb{R}\backslash V}(X_s)ds\right]=\alpha_a(1)\hat \PP^a[1-e^{-q \rho}]\PP^{\pm a}\left[\int_0^\infty e^{-q s}dL^a_s\right]+\alpha_{-a}(1)\hat \PP^{-a}[1-e^{-q \rho}]\PP^{\pm a}\left[\int_0^\infty e^{-q s}dL^{-a}_s\right],$$\normalsize  i.e.
$$1=\alpha_a(1)\hat \PP^a[1-e^{-q \rho}]\PP^{a}\left[\int_0^\infty e^{-q s}dL^a_s\right]+\alpha_{-a}(1)\hat \PP^{-a}[1-e^{-q \rho}]\PP^a[e^{-qT_{-a}}]\PP^{-a}\left[\int_0^\infty e^{-q s}dL^{-a}_s\right]$$ and 
$$1=\alpha_a(1)\hat \PP^a[1-e^{-q \rho}]\PP^{-a}[e^{-qT_a}]\PP^{a}\left[\int_0^\infty e^{-q s}dL^a_s\right]+\alpha_{-a}(1)\hat \PP^{-a}[1-e^{-q \rho}]\PP^{-a}\left[\int_0^\infty e^{-q s}dL^{-a}_s\right].$$

Now \cite[Lemma~V.3]{bertoin} $\PP^{\pm a}\left[\int_0^\infty e^{-qt}dL^{\pm a}_t\right]=
\frac{u_q(0)}{u_1(0)}=\frac{\Phi'(q)}{\Phi'(1)}$, and the formulae follow. 

\ref{lemma:iii}. We have, using \cite[Lemma~2(iv)]{ppr} in the last equality, $\hat \PP^{-a}[e_q<\rho,\tau_{-a}^-=0]=\hat\PP^{-a}[e_q<\tau_{-a}^+,\tau_{-a}^-=0]=\frac{1}{\alpha_{-a}(1)}\tilde{\PP}^{-a}[e_q<T_{-a},\tau_{-a}^-=0]=\frac{\Phi'(1)}{\alpha_{-a}(1)}\nn[e_q<T_0,\tau_0^-=0]=\frac{W^{(1)}(2a)}{e^{\Phi(1)2a}-1}\frac{\diffusion}{2}\Phi(q)$.

\ref{lemma:iv}.  Follows from \ref{lemma:i} and \ref{lemma:iii} because $\hat\PP^{-a}[\rho=0]=0$ and hence $\Omega=\{\tau_{-a}^+=0\}\cup \{\tau_{-a}^-=0\}$ a.e.-$\hat{\PP}^{-a}$.
\end{proof}
Our last main result identifies the Laplace transforms of the entrance laws (under certain assumptions on $\diffusion$).

\begin{theorem}\label{theorem:main}
For $\beta\in (0,\infty)$, $\hat\eta ^{-a}_\beta$, and also $\hat \eta^a_\beta$ when $\diffusion=0$, is absolutely continuous with respect to Lebesgue measure and Lebesgue-a.e. in $y\in \mathbb{R}$ we have 
\begin{enumerate}[(i)]
\item\label{theorem:main:i} $\frac{\hat \eta^{-a}_\beta(dy)}{dy}=\frac{W^{(1)}(2a)}{e^{\Phi(1)2a}-1}\left[ \frac{W^{(\beta)}(a-y)-W^{(\beta)}(-a-y)e^{\Phi(\beta)2a}}{ W^{(\beta)}(2a)}\right],$ while
\item\label{theorem:main:ii}  when $\diffusion=0$, then \footnotesize $$\frac{\hat \eta_\beta^{a}(dy)}{dy}=\frac{\Phi'(1)}{\alpha_a(1)}\left[e^{\Phi(\beta)(a-y)}-\frac{W^{(\beta)}(a-y)}{W^{(\beta)}(2a)}e^{\Phi(\beta)2a}-W^{(\beta)}(-a-y)e^{\Phi(\beta)2a}\left(\Phi'(\beta)^{-1}-\frac{e^{\Phi(\beta)2a}}{W^{(\beta)}(2a)}\right)\right].$$\normalsize
\end{enumerate}
 \end{theorem}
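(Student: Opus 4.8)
The plan is to obtain both entrance-law resolvents simultaneously by decomposing, via the exit system, the occupation measure of $X$ started from a point of $\set$ into its excursion contributions, and then solving the resulting $2\times 2$ linear system; afterwards I will compare this with the route through Theorem~\ref{theorem:conditioned} and Lemma~\ref{lemma} and explain why the latter is what forces the restriction $\diffusion=0$ for $\hat\eta^a_\beta$.

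Fix $\beta\in(0,\infty)$ and a bounded measurable $g$ supported off $\set$, and for $z\in\set$ set $R_\beta(z,g):=\PP^z\left[\int_0^\infty e^{-\beta t}g(X_t)\,dt\right]$. Since $z$ is regular for $\set$ (so $\rho=0$ under $\PP^z$) and the occupation time of the Lebesgue-null set $\set$ vanishes a.s. (the resolvents having densities), the whole of $\int_0^\infty e^{-\beta t}g(X_t)\,dt$ is carried by the excursions away from $\set$. Applying the master formula with $Z_s=e^{-\beta s}$ and $f=\int_0^\infty e^{-\beta t}g(X_t)\mathbbm 1(t<\rho)\,dt$, and using $L=\alpha_a(1)L^a+\alpha_{-a}(1)L^{-a}$ together with $X_s=a$ on $\mathrm{supp}(dL^a)$ and $X_s=-a$ on $\mathrm{supp}(dL^{-a})$, I get
$$R_\beta(z,g)=\alpha_a(1)\hat\eta^a_\beta[g]\,\PP^z\left[\int_0^\infty e^{-\beta s}dL^a_s\right]+\alpha_{-a}(1)\hat\eta^{-a}_\beta[g]\,\PP^z\left[\int_0^\infty e^{-\beta s}dL^{-a}_s\right].$$

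Next I would evaluate every ingredient. On the left, $R_\beta(z,g)=\int u_\beta(y-z)g(y)\,dy$ is the full $\beta$-resolvent density of $X$ from $z$, with $u_\beta(w)=\Phi'(\beta)e^{-\Phi(\beta)w}-W^{(\beta)}(-w)$. On the right, $\PP^{\pm a}\left[\int_0^\infty e^{-\beta s}dL^{\pm a}_s\right]=\Phi'(\beta)/\Phi'(1)$ (as computed in the proof of Lemma~\ref{lemma}), while reaching the far point first inserts a hitting transform: $\PP^{-a}\left[\int_0^\infty e^{-\beta s}dL^a_s\right]=e^{-\Phi(\beta)2a}\Phi'(\beta)/\Phi'(1)$ and $\PP^{a}\left[\int_0^\infty e^{-\beta s}dL^{-a}_s\right]=\left(e^{\Phi(\beta)2a}-W^{(\beta)}(2a)/\Phi'(\beta)\right)\Phi'(\beta)/\Phi'(1)$, the parenthetical factors being $\PP[e^{-\beta T_{2a}}]$ and $\PP[e^{-\beta T_{-2a}}]$ from the remark after \eqref{eq:local-times}. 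Taking $z=-a$ and $z=a$ then produces two linear equations for $\hat\eta^a_\beta[g]$ and $\hat\eta^{-a}_\beta[g]$ whose determinant equals $\alpha_a(1)\alpha_{-a}(1)\left(-e^{-\Phi(\beta)2a}W^{(\beta)}(2a)/\Phi'(\beta)\right)\neq 0$, so the system is uniquely solvable. Eliminating $\hat\eta^a_\beta[g]$ collapses, after the exponential terms of $u_\beta$ cancel, to
$$\hat\eta^{-a}_\beta[g]=\frac{\Phi'(1)\left(e^{\Phi(\beta)2a}\int u_\beta(y+a)g(y)dy-\int u_\beta(y-a)g(y)dy\right)}{\alpha_{-a}(1)W^{(\beta)}(2a)}=\frac{W^{(1)}(2a)}{e^{\Phi(1)2a}-1}\int\frac{W^{(\beta)}(a-y)-e^{\Phi(\beta)2a}W^{(\beta)}(-a-y)}{W^{(\beta)}(2a)}g(y)\,dy,$$
using $\Phi'(1)/\alpha_{-a}(1)=W^{(1)}(2a)/(e^{\Phi(1)2a}-1)$, which is \ref{theorem:main:i}; eliminating $\hat\eta^{-a}_\beta[g]$ instead yields \ref{theorem:main:ii} by the same bookkeeping. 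Letting $g$ range over all continuous compactly supported functions off $\set$ identifies $\hat\eta^{\pm a}_\beta$ as absolutely continuous with the stated densities. I would stress that this argument never invokes $\diffusion=0$, so it in fact delivers \ref{theorem:main:ii} for every $\diffusion$ (and a fortiori in the case claimed).

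The main obstacle is the rigorous justification of the first display: that the $\beta$-discounted occupation of $g$ coincides $\PP^z$-a.s.\ with the sum over excursion intervals, i.e.\ that $dL$ charges only $\{X_s\in\set\}$, that the pre-excursion contribution is absent because $\rho=0$ from $z\in\set$, and that $f$ is an admissible integrand in the master formula; the remainder is linear algebra and scale-function arithmetic. By way of contrast, the route the paper favours --- reading $\hat\eta^{\pm a}_\beta$ off the boundary limits $Z^\pm_{q,\beta}(\pm a,\cdot)$ of Theorem~\ref{theorem:conditioned} weighted by the directional excursion rates --- requires splitting the excursion mass according to $\tau^{\pm}_{\cdot}=0$. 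For $-a$ this split is furnished by Lemma~\ref{lemma}\ref{lemma:iii}--\ref{lemma:iv}, but the analogous masses $\hat\PP^a[1-e^{-\beta\rho};\tau_a^\pm=0]$ at $a$ resist evaluation once $\diffusion>0$ (cf.\ Remark~\ref{remark:lemma}); when $\diffusion=0$ the downward part vanishes and only Lemma~\ref{lemma}\ref{lemma:i} is needed, which is exactly why that approach recovers $\hat\eta^a_\beta$ precisely in the case $\diffusion=0$.
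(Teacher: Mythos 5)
Your argument is correct, and it is a genuinely different — and in one respect stronger — route than the paper's. The paper obtains $\hat\eta^{\pm a}_\beta$ by a double limit: it writes $\hat\eta^{\pm a}_\beta(f)=\lim_{q\downarrow 0}\lim_{s\downarrow 0}\hat\PP^{\pm a}[Z_{q,\beta}(X_s,f/h_q)h_q(X_s);s<\rho\land e_q]$, splits according to $\tau^\pm_{\cdot}=0$, and feeds in the boundary limits of Theorem~\ref{theorem:conditioned} weighted by the directional excursion masses of Lemma~\ref{lemma}\ref{lemma:iii}--\ref{lemma:iv}; this is delicate (uniformity of the convergence in Theorem~\ref{theorem:conditioned}, dominated convergence under the $\sigma$-finite measure $\hat\PP^{\pm a}$) and is exactly what breaks down for $\hat\eta^a_\beta$ when $\diffusion>0$. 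You instead apply the master formula with $Z_s=e^{-\beta s}$ and $f=\int_0^\rho e^{-\beta t}g(X_t)dt$ to decompose the full resolvent $\PP^{\pm a}[\int_0^\infty e^{-\beta t}g(X_t)dt]$ over excursions, and solve the resulting $2\times 2$ system. I checked the bookkeeping: with $A:=\alpha_a(1)\hat\eta^a_\beta[g]\Phi'(\beta)/\Phi'(1)$, $B:=\alpha_{-a}(1)\hat\eta^{-a}_\beta[g]\Phi'(\beta)/\Phi'(1)$, $c:=e^{-\Phi(\beta)2a}$, $d:=e^{\Phi(\beta)2a}-W^{(\beta)}(2a)/\Phi'(\beta)$, one gets $R_a=A+dB$, $R_{-a}=cA+B$, $1-cd=e^{-\Phi(\beta)2a}W^{(\beta)}(2a)/\Phi'(\beta)>0$, and the exponentials in $u_\beta$ cancel exactly as you claim, reproducing both displayed densities (using $\Phi'(1)/\alpha_{-a}(1)=W^{(1)}(2a)/(e^{\Phi(1)2a}-1)$). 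The point you rightly single out as the main obstacle does go through: under $\PP^{\pm a}$ one has $\rho=0$ a.s.\ so $0\notin G$; $M\subset\{t:X_t\in\set\}\cup\{t:X_{t-}\in\set\}$ is Lebesgue-null a.s.\ because the resolvent measures have densities; $dL^{\pm a}$ does not charge the countable set of jump times, so $X_s=\pm a$ for $dL^{\pm a}$-a.e.\ $s$; and $\hat\PP^{\pm a}$ is $\sigma$-finite on $\{\rho>0\}$ so Tonelli gives $\hat\PP^{\pm a}[f]=\hat\eta^{\pm a}_\beta[g]$. A pleasant by-product is that $\alpha_a(1)\tfrac{\Phi'(\beta)}{\Phi'(1)}\hat\eta^{a}_\beta\leq U_\beta(\cdot-a)$ as measures, which yields the asserted absolute continuity with no extra work. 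Finally, your observation that the argument never uses $\diffusion=0$ is correct and does strengthen part~\ref{theorem:main:ii} to all $\diffusion$, thereby filling the gap flagged in Remark~\ref{remark:missing}: as a sanity check, for Brownian motion the stated density for $\hat\eta^a_\beta$ integrates to $\tfrac{1}{\beta}\hat\PP^a[1-e^{-\beta\rho}]$ as computed independently from Lemma~\ref{lemma}\ref{lemma:i}, and is the reflection of the density in part~\ref{theorem:main:i}, as symmetry demands. The only blemishes are cosmetic: the determinant you quote carries a spurious sign/normalization (what matters, and what is true, is that $1-cd\neq 0$), and for signed $g$ one should run the argument on $g^\pm$ separately.
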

\begin{remark}\label{remark:missing}
The case $\hat \eta^a_\beta$ when $\diffusion>0$ is missing because in Lemma~\ref{lemma} we are missing the pertinent (tractable) expressions in this case (cf. Remark~\ref{remark:lemma}).
\end{remark}
\begin{remark}
In the proof, more precisely, the quantity $$\frac{\int_0^\infty e^{-\beta t}\hat \PP^{-a}(X_t\in dy,\tau_{-a}^-=0)dt}{dy}=\frac{\diffusion}{2}\frac{W^{(1)}(2a)}{e^{\Phi(1)2a}-1}\left((W^{(\beta)})'(-a-y)-\Phi(\beta)W^{(\beta)}(-a-y)\right)$$ (and hence also the quantity $\frac{\int_0^\infty e^{-\beta t}\hat \PP^{-a}(X_t\in dy,\tau_{-a}^+=0)dt}{dy}$) is identified. 
\end{remark}
\begin{proof}
Let $f:\mathbb{R}\to \mathbb{R}$ be a continuous bounded function. Then, for $t\in (0,\infty)$, by monotone convergence, 
$\hat \PP^{\pm a}[f(X_t);t<\rho]=\lim_{q\downarrow 0}\hat\PP^{\pm a}[f(X_t);t<\rho\land e_q]$, and further for each $q\in (0,\infty)$, by dominated convergence and the Markov property,
$$\hat \PP^{\pm a}[f(X_t);t <\rho\land e_q]=\lim_{s\downarrow 0}\hat\PP^{\pm a}[f(X_{t+s});t+s<\rho\land e_q]=\lim_{s\downarrow 0}\hat \PP^{\pm a}[\PP^{X_s}[f(X_t);t<\rho\land e_q];s<\rho\land e_q]$$ \footnotesize $$=\lim_{s\downarrow 0}\hat \PP^{\pm a}\left[\frac{\PP^{X_s}[f(X_t);t<\rho\land e_q]}{h_q(X_s)}h_q(X_s);s<\rho\land e_q\right]=\lim_{s\downarrow 0}\underbrace{\hat \PP^{\pm a}\left[\PP^{X_s}_{\times,q}\left[\frac{f(X_t)}{h_q(X_t)};t<\zeta\right]h_q(X_s);s<\rho\land e_q\right]}_{\leq \Vert f\Vert_\infty \hat{\PP}^{\pm a}(t<\rho)}.$$\normalsize
Thus, for $\beta\in (0,\infty)$, by dominated convergence  and Tonelli-Fubini (use the estimate noted in the preceding display and Lemma~\ref{lemma}\ref{lemma:i}), recalling the notation of Definition~\ref{definition}, one obtains $$\hat \eta^{\pm a}_\beta(f)=\lim_{q\downarrow 0}\lim_{s\downarrow 0}\hat \PP^{\pm a}[Z_{q,\beta}(X_s,f/h_q)h_q(X_s);s<\rho\land e_q].$$ 

\ref{theorem:main:i}. We compute the limits of $\hat\PP^{-a}[Z_{q,\beta}(X_s,f/h_q)h_q(X_s);s<\rho\land e_q,\tau_{- a}^{\pm}=0]$ separately, then \ref{theorem:main:i} is got by taking the sum. 

Consider now first $\lim_{q\downarrow 0}\lim_{s\downarrow 0}\hat\PP^{-a}[Z_{q,\beta}(X_s,f/h_q)h_q(X_s);s<\rho\land e_q,\tau_{- a}^-=0]$ for $f$ compactly supported in $\mathbb{R}\backslash \set$, which implies that $f/h_q$ is also continuous and compactly supported in $\mathbb{R}\backslash \set$. 

Let $q\in (0,\infty)$. By Theorem~\ref{theorem:conditioned} for every $\epsilon>0$ there is a $\delta>0$ such that for all $s\in (0,\infty)$, $\{0> X_s+a\geq -\delta\}\subset \{\vert Z_{q,\beta}(X_s,f/h_q)-Z_{q,\beta}^-(-a,f/h_q)\vert \leq \epsilon\}$. Consequently, for  $s\in (0,\infty)$, one has the estimate $$\hat\PP^{-a}[\vert Z_{q,\beta}(X_s,f/h_q)-Z_{q,\beta}^-(-a,f/h_q)\vert h_q(X_s);s<\rho\land e_q,\tau_{- a}^-=0]$$
$$\leq \epsilon \hat\PP^{-a}[h_q(X_s);s<\rho\land e_q,\tau_{- a}^-=0]+\hat\PP^{-a}[h_q(X_s)\mathbbm{1}_{\{\vert X_s+a\vert >\delta\}\cup \{X_s\geq -a\}},s<\rho\land e_q,\tau_{- a}^-=0]$$
$$=\epsilon \hat\PP^{-a}(s<e_q<\rho,\tau_{-a}^-=0)+\hat\PP^{-a}[h_q(X_s)\mathbbm{1}_{\{\vert X_s+a\vert >\delta\}\cup \{X_s\geq -a\}},s<\rho\land e_q,\tau_{- a}^-=0],$$ since
$\hat\PP^{-a}[h_q(X_s);s<\rho\land e_q,\tau_{- a}^-=0]=\hat\PP^{-a}(s<e_q<\rho,\tau_{-a}^-=0)$. For the same reason we conclude via dominated convergence (using Lemma~\ref{lemma}\ref{lemma:i} and the fact that $\hat\PP^{-a}(X_0\ne -a)=0$) for the second term, and say monotone convergence for the first term of the preceding display, that \footnotesize $$\limsup_{s\downarrow 0}\hat\PP^{-a}[\vert Z_{q,\beta}(X_s,f/h_q)-Z_{q,\beta}^-(-a,f/h_q)\vert h_q(X_s);s<\rho\land e_q,\tau_{- a}^-=0]\leq \epsilon \hat\PP^{-a}(e_q<\rho,\tau_{-a}^-=0)\to 0\text{ as }\epsilon\downarrow 0.$$\normalsize

By this it is proved that 
$$\lim_{q\downarrow 0}\lim_{s\downarrow 0}\hat\PP^{-a}[Z_{q,\beta}(X_s,f/h_q)h_q(X_s);s<\rho\land e_q,\tau_{- a}^-=0]$$ $$= \lim_{q\downarrow 0}Z^-_{q,\beta}(-a,f/h_q)\lim_{s\downarrow 0}\hat\PP^{-a}(s<e_q<\rho,\tau_{-a}^-=0)= \lim_{q\downarrow 0}Z^-_{q,\beta}(-a,f/h_q)\hat\PP^{-a}(e_q<\rho,\tau_{-a}^-=0),$$ which by Theorem~\ref{theorem:conditioned}\ref{conditioned:i} and  Lemma~\ref{lemma}\ref{lemma:iii} 
$$= \lim_{q\downarrow 0}\int f(y)\frac{(W^{(\beta+q)})'(-a-y)-\Phi(\beta+q)W^{(\beta+q)}(-a-y)}{\Phi(q)}dy\frac{W^{(1)}(2a)}{e^{\Phi(1)2a}-1}\frac{\diffusion}{2}\Phi(q).$$
$$= \lim_{q\downarrow 0}\int f(y)\left((W^{(\beta+q)})'(-a-y)-\Phi(\beta+q)W^{(\beta+q)}(-a-y)\right)dy\frac{W^{(1)}(2a)}{e^{\Phi(1)2a}-1}\frac{\diffusion}{2}$$
that finally converges to $ \int f(y)((W^{(\beta)})'(-a-y)-\Phi(\beta)W^{(\beta)}(-a-y))dy\frac{W^{(1)}(2a)}{e^{\Phi(1)2a}-1}\frac{\diffusion}{2}$ as $q\downarrow 0$. (The continuity of $(\Wq)'$ in $q\in (0,\infty)$ (indeed analyticity of $(\Wq)'$ in $q\in \mathbb{C}$) follows from \cite[Eq.~(8.29)]{kyprianou}.) 

Similarly, again for $f$ compactly supported in $\mathbb{R}\backslash \set$, $\lim_{q\downarrow 0}\lim_{s\downarrow 0}\hat\PP^{-a}[Z_{q,\beta}(X_s,f/h_q)h_q(X_s);s<\rho\land e_q,\tau_{- a}^+=0]=\lim_{q\downarrow 0}Z^+_{q,\beta}(-a,f/h_q)\hat\PP^{-a}(e_q<\rho,\tau_{-a}^-=0)$, which by Theorem~\ref{theorem:conditioned}\ref{conditioned:ii} and  Lemma~\ref{lemma}\ref{lemma:iv} is equal to $ \int f(y)\left[ \frac{W^{(\beta)}(a-y)-W^{(\beta)}(-a-y)e^{\Phi(\beta)2a}}{ W^{(\beta)}(2a)} -\frac{\diffusion}{2}((W^{(\beta)})'(-a-y)-\Phi(\beta)W^{(\beta)}(-a-y))\right]dy\frac{W^{(1)}(2a)}{e^{\Phi(1)2a}-1}$.

\ref{theorem:main:ii} follows by near identical lines of reasoning using Theorem~\ref{theorem:conditioned}\ref{conditioned:iv} and Lemma~\ref{lemma}\ref{lemma:i}, since thanks to $\diffusion=0$, $\hat\PP^a$ is carried by $\{\tau_a^+=0\}$; we omit the details.
\end{proof}
\section{An application}\label{section:applications}
For $\{x,y\}\subset \mathbb{R}$ define $$S_{x,y}:=\sup\{t\in (0,T_y):X_t=x\}\quad (\sup\emptyset=0),$$
the last time the process is at $x$ before it hits $y$ ($=0$, when there is no such time). We are interested in the Laplace transform of $T_y-S_{x,y}$ on $\{T_y<\infty\}$, viz. of the time that elapses between the last visit to $x$ before hitting $y$ and the hitting time of $y$, on the event that $y$ is hit at all.

\begin{lemma}\label{lemma:to-laplace}
Let $\lambda\in [0,\infty)$, $x\in \mathbb{R}$. Then 
\begin{enumerate}[(i)]
\item \label{lemma:to-laplace:ii} $\PP^x[e^{-\lambda \rho};T_{a}<T_{-a}]=\frac{\Wl(x+a)-\Wl(x-a)e^{\Phi(\lambda)2a}}{\Wl(2a)}$; in particular when $x\in (-\infty,a]$, then $\PP^x[e^{-\lambda\rho};T_{a}<T_{-a}]=\frac{\Wl(x+a)}{\Wl(2a)}$; and if even $x\in (-\infty,-a]$, then (of course) $\PP^x[e^{-\lambda\rho};T_{a}<T_{-a}]=0$.
\item\label{lemma:to-laplace:i} $\PP^x[e^{-\lambda \rho};T_{-a}<T_a]=e^{\Phi(\lambda)(x+a)}-\frac{\Wl(x+a)}{\Wl(2a)}e^{\Phi(\lambda)2a}-e^{\Phi(\lambda)2a}\Wl(x-a)(\Phi'(\lambda)^{-1}-\frac{e^{\Phi(\lambda)2a}}{\Wl(2a)})$; in particular when $x\in (-\infty,a]$, then $\PP^x[e^{-\lambda\rho};T_{-a}<T_a]=e^{\Phi(\lambda)(x+a)}-\frac{\Wl(x+a)}{\Wl(2a)}e^{\Phi(\lambda)2a}$; and if even $x\in (-\infty,-a]$, then $\PP^x[e^{-\lambda\rho};T_{-a}<T_a]=e^{\Phi(\lambda)(a+x)}$.
\end{enumerate}
\end{lemma}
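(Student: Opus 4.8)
The plan is to lean throughout on the absence of positive jumps. Because $X$ creeps upwards, for any level $z\geq X_0$ one has $T_z=\tau_z^+$ with $X_{\tau_z^+}=z$; and because every point is regular for both half-lines, a downward passage through a level precedes (or coincides with) its hitting, $\tau_c^-\leq T_c$, with equality exactly when $X$ creeps downwards onto $c$. From this I would first extract three event identities. For $x\leq a$: reaching $a$ before $-a$ precludes ever dipping below $-a$ beforehand (the process would otherwise have to climb continuously back through $-a$ and hit it), so $\{T_a<T_{-a}\}=\{\tau_a^+<\tau_{-a}^-\}$ and $\rho=\tau_a^+$ on this set. For $x\leq -a$: the process necessarily climbs to $-a$ before $a$, so on $\{\rho<\infty\}$ one has $T_{-a}<T_a$ and $\rho=\tau_{-a}^+$. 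For $x\in[-a,a]$: on $\{T_{-a}<T_a\}$ the interval $(-a,a)$ must be exited downwards first, i.e. $\tau_{-a}^-<\tau_a^+$. Granting these, the two base cases are immediate: part~\ref{lemma:to-laplace:ii} for $x\leq a$ is the two-sided upward exit identity $\PP^x[e^{-\lambda\tau_a^+};\tau_a^+<\tau_{-a}^-]=\Wl(x+a)/\Wl(2a)$ \cite[Theorem~8.1]{kyprianou}, and part~\ref{lemma:to-laplace:i} for $x\leq -a$ is $\PP^x[e^{-\lambda\tau_{-a}^+}]=e^{\Phi(\lambda)(x+a)}$, got by optional sampling of $(e^{\Phi(\lambda)X_t-\lambda t})_{t\in[0,\infty)}$.

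The remaining ranges I would dispatch by a single strong-Markov step each, recycling the integral identities already assembled for Proposition~\ref{proposition:killed}. For part~\ref{lemma:to-laplace:ii} and $x>a$: no level below $a$, hence neither point of $\set$, is reached before $\tau_a^-$, so conditioning at $\tau_a^-$ with the continuous boundary value $g(\xi):=\PP^\xi[e^{-\lambda\rho};T_a<T_{-a}]=\Wl(\xi+a)/\Wl(2a)$ (for $\xi\leq a$; note $g(a)=1$ absorbs the creeping case) gives $\PP^x[e^{-\lambda\rho};T_a<T_{-a}]=\Wl(2a)^{-1}\PP^x[\Wl(X_{\tau_a^-}+a)e^{-\lambda\tau_a^-};\tau_a^-<\infty]$, which by spatial homogeneity and \eqref{eq:help:two:ii} equals $(\Wl(x+a)-\Wl(x-a)e^{\Phi(\lambda)2a})/\Wl(2a)$. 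For part~\ref{lemma:to-laplace:i} and $x\in[-a,a]$: conditioning at $\tau_{-a}^-$ with $\phi(\xi):=\PP^\xi[e^{-\lambda\rho};T_{-a}<T_a]=e^{\Phi(\lambda)(\xi+a)}$ (for $\xi\leq -a$) reduces the claim to $e^{\Phi(\lambda)a}\PP^x[e^{-\lambda\tau_{-a}^-+\Phi(\lambda)X_{\tau_{-a}^-}};\tau_{-a}^-<\tau_a^+]$, which \eqref{eq:help:one:i} evaluates to $e^{\Phi(\lambda)(x+a)}-\frac{\Wl(x+a)}{\Wl(2a)}e^{\Phi(\lambda)2a}$. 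Finally, for part~\ref{lemma:to-laplace:i} and $x>a$ I condition once more at $\tau_a^-$, now feeding in the boundary function just found on $(-\infty,a]$, split the expectation into its exponential and scale-function pieces, and evaluate these by \eqref{eq:help:one:ii} and \eqref{eq:help:two:ii} respectively; collecting terms produces the displayed expression.

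A shortcut for part~\ref{lemma:to-laplace:i} is available: since $\{T_a=T_{-a}\}$ is negligible, one has $\PP^x[e^{-\lambda\rho};T_{-a}<T_a]=\PP^x[e^{-\lambda\rho}]-\PP^x[e^{-\lambda\rho};T_a<T_{-a}]$; the first term is $1-h_\lambda(x)$ with $h_\lambda$ read off from Proposition~\ref{proposition:killed}, and the second is the formula of part~\ref{lemma:to-laplace:ii}, so a short algebraic simplification reproduces the claim. This is a quick cross-check, but it strictly applies only for $\lambda>0$ and would need a separate continuity pass for $\lambda=0$; I would therefore retain the direct computation as the main argument, observing that the invoked identities \eqref{eq:help:one:i}, \eqref{eq:help:one:ii} and \eqref{eq:help:two:ii} remain valid at $\lambda=0$ through the same exponential martingale.

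The computations reduce to the exit formulas already in hand and are therefore routine; the genuinely delicate point is the path analysis underpinning the three event identities, together with the requirement that creeping be accounted for through the \emph{continuous} extensions of $g$ and $\phi$ to the endpoints (so that $g(a)=1$ and $\phi(-a)=1$ agree with the values forced by $\rho=0$ there). I expect the regularity-and-creeping bookkeeping, rather than any algebra, to be the main obstacle.
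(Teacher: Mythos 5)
Your proof is correct and follows essentially the same route as the paper's: the same case split at the levels $-a$ and $a$, the same strong Markov conditioning at $\tau_{-a}^-$ and $\tau_a^-$, and the same invocations of \eqref{eq:help:one:i}, \eqref{eq:help:one:ii} and \eqref{eq:help:two:ii}. The only additions are your explicit path-identity and creeping bookkeeping and the $h_\lambda$ cross-check, which the paper leaves implicit or omits.
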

\begin{remark}
When $\psi'(0+)=0=\lambda$, $\Phi'(\lambda)^{-1}$ is interpreted in the limiting sense, i.e. it is then equal to $0$.
\end{remark}
\begin{proof}
\ref{lemma:to-laplace:ii}. Since a.s. $X$ has no positive jumps it is clear that $\PP^x[e^{-\lambda\rho};T_{a}<T_{-a}]=0$ for $x\in (-\infty,-a]$. Let $x\in (-a,a]$. Then $\PP^x[e^{-\lambda\rho};T_{a}<T_{-a}]=\PP^x[e^{-\lambda\tau_a^+};\tau_a^+<\tau_{-a}^-]=\frac{\Wl(x+a)}{\Wl(2a)}$. Finally for $x\in (a,\infty)$, one has $\PP^x[e^{-\lambda\rho};T_{a}<T_{-a}]=\PP^x\left[e^{-\lambda \tau_a^-}\PP_{X_{\tau_a^-}}[e^{-\lambda \rho};T_{-a}<T_a];\tau_a^-<\infty\right]=\PP^x\left[e^{-\lambda \tau_a^-}\frac{\Wl(X_{\tau_a^-}+a)}{\Wl(2a)};\tau_a^-<\infty\right]$, which thanks to \eqref{eq:help:two:ii} is $=\frac{\Wl(x+a)-\Wl(x-a)e^{\Phi(\lambda)2a}}{\Wl(2a)}$.

\ref{lemma:to-laplace:i}. For $x\in (-\infty,-a]$, one has $\PP^x[e^{-\lambda\rho};T_{-a}<T_a]=\PP^x[e^{-\lambda \tau_{-a}^+};\tau_{-a}^+<\infty]=e^{\Phi(\lambda)(a+x)}$. 
If $x\in (-a,a]$, then using \eqref{eq:help:one:i} one obtains $\PP^x[e^{-\lambda\rho};T_{-a}<T_a]=\PP^x[\PP_{X_{\tau_{-a}^-}}[e^{-\lambda \tau_{-a}^+};\tau_{-a}^+<\infty]e^{-\lambda\tau_{-a}^-};\tau_{-a}^-<\tau_a^+]=\PP^x[e^{\Phi(\lambda)(X_{\tau_{-a}^-}+a)-\lambda\tau_{-a}^-};\tau_{-a}^-<\tau_a^+]=e^{\Phi(\lambda)(x+a)}-\frac{\Wl(x+a)}{\Wl(2a)}e^{\Phi(\lambda)2a}$. Finally if $x\in  (a,\infty)$, then using \eqref{eq:help:one:ii} and \eqref{eq:help:two:ii}, $\PP^x[e^{-\lambda \rho};T_{-a}<T_a]=\PP^x\left[e^{-\lambda \tau_a^-}\PP_{X_{\tau_a^-}}[e^{-\lambda \rho};T_{-a}<T_a];\tau_a^-<\infty\right]=\PP^x\left[e^{-\lambda \tau_a^-}\left(e^{\Phi(\lambda)(X_{\tau_a^-}+a)}-\frac{\Wl(X_{\tau_a^-}+a)}{\Wl(2a)}e^{\Phi(\lambda)2a}\right);\tau_a^-<\infty\right]=e^{\Phi(\lambda)2a}(e^{\Phi(\lambda)(x-a)}-\Wl(x-a)\Phi'(\lambda)^{-1})-\frac{e^{\Phi(\lambda)2a}}{\Wl(2a)}(\Wl(x+a)-\Wl(x-a)e^{\Phi(\lambda)2a})$.
\end{proof}

\begin{theorem}\label{theorem:application}
Let $x<y$ and $z$ be real numbers. Then for $\lambda\in [0,\infty)$,
$$\PP^z[e^{-\lambda (T_y-S_{x,y})};T_y<\infty]=\frac{\Wl(z-x)-\Wl(z-y)e^{\Phi(\lambda)(y-x)}}{\Wl(y-x)}+\frac{W(y-x)}{\Wl(y-x)}e^{-\Phi(0)(y-x)}$$
$$\times\left[e^{\Phi(\lambda)(z-x)}-\frac{\Wl(z-x)}{\Wl(y-x)}e^{\Phi(\lambda)(y-x)}-e^{\Phi(\lambda)(y-x)}\Wl(z-y)\left(\Phi'(\lambda)^{-1}-\frac{e^{\Phi(\lambda)(y-x)}}{\Wl(y-x)}\right)\right].$$
\end{theorem}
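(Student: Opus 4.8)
The plan is to express the answer through the two first-passage transforms of Lemma~\ref{lemma:to-laplace} together with a single $z$-independent constant, by decomposing on which of the two points is reached first. Put $\rho_V:=T_x\wedge T_y$. On $\{T_y<T_x\}$ the point $x$ is not visited before $y$, so $S_{x,y}=0$ and $T_y-S_{x,y}=T_y=\rho_V$; this part of the expectation equals $\PP^z[e^{-\lambda\rho_V};T_y<T_x]$, which is exactly the first displayed term. Indeed it is Lemma~\ref{lemma:to-laplace}\ref{lemma:to-laplace:ii} read on the set $\{x,y\}$ in place of $\{-a,a\}$ (after translating by $-(x+y)/2$, so that $z-x$, $z-y$, $y-x$ play the roles of $w+a$, $w-a$, $2a$).

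For the complementary event I would apply the strong Markov property at the genuine stopping time $T_x$. Since $T_x$ is the first visit to $x$, no visit to $x$ precedes it, so on $\{T_x<T_y\}$ one has $T_y=T_x+T_y\circ\theta_{T_x}$ and $S_{x,y}=T_x+S_{x,y}\circ\theta_{T_x}$, whence $T_y-S_{x,y}=(T_y-S_{x,y})\circ\theta_{T_x}$; the time taken to travel from $z$ to $x$ does not enter the measured quantity at all. As $\{T_x<T_y\}\in\FF^0_{T_x}$ and $X_{T_x}=x$, the strong Markov property turns this contribution into $\PP^z(T_x<T_y)\cdot R$, where $R:=\PP^x[e^{-\lambda(T_y-S_{x,y})};T_y<\infty]$ is a constant independent of $z$. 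The factor $\PP^z(T_x<T_y)$, the probability of reaching $x$ before $y$, is Lemma~\ref{lemma:to-laplace}\ref{lemma:to-laplace:i} specialised to $\lambda=0$ (it carries no Laplace weight precisely because the approach to $x$ is not part of the elapsed time), and $R$ will turn out to be the prefactor $W(y-x)e^{-\Phi(0)(y-x)}/\Wl(y-x)$ of the second displayed term.

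Identifying $R$ is the heart of the matter. After $S_{x,y}$ the path is the first excursion of $X$ away from the single point $x$ (timed by the local time $L^x$ of Section~\ref{section:local}) whose supremum reaches $y$, run up to its first passage of $y$, and $T_y-S_{x,y}$ is the length of that upward climb. Regarding the excursions from $x$ as a Poisson point process directed by $\tilde{\PP}^x$ and running the usual exponential race between excursions that reach $y$ (necessarily by climbing) and those that leave $x$ forever without reaching $y$, one obtains $R$ as the ratio of the $e^{-\lambda\tau^+_y}$-weighted rate of the former to the total rate of the two terminal types. Approximating $\tilde{\PP}^x$ by $c_\epsilon^{-1}\PP^{x+\epsilon}$ as $\epsilon\downarrow0$ and using the upward two-sided exit identity $\PP^{x+\epsilon}[e^{-\lambda\tau^+_y};\tau^+_y<\tau^-_x]=\Wl(\epsilon)/\Wl(y-x)$ \cite[Chapter~8]{kyprianou}, the weighted success rate is $c_0/\Wl(y-x)$ with $c_0:=\lim_{\epsilon\downarrow0}\Wl(\epsilon)/c_\epsilon$; crucially $c_0$ is free of $\lambda$ and of $y-x$, because $(\Wl)'(0+)=2/\diffusion$ does not depend on $\lambda$. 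The total terminal rate, unweighted, is computed from the escape probabilities (which bring in $\PP^{x+\epsilon}(\tau^+_y<\infty)=e^{-\Phi(0)(y-x-\epsilon)}$) and equals $c_0\,e^{\Phi(0)(y-x)}/W(y-x)$, so that $c_0$ cancels in the ratio and
\[
R=\frac{W(y-x)}{\Wl(y-x)}\,e^{-\Phi(0)(y-x)},
\]
exactly the second prefactor. Assembling the two contributions then yields the asserted identity.

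The main obstacle is this evaluation of $R$: the decomposition at $T_x$ is routine, but the last-exit step needs care in pushing the Poisson-race argument cleanly through the exit system (so as to legitimately discard the infinitely many short excursions from $x$ that never reach $y$, and to handle both sides of $x$) and in book-keeping the escape rate that manufactures the factor $e^{-\Phi(0)(y-x)}$. A cleaner alternative, which I would attempt first, is a Williams-type time reversal of the path on $[S_{x,y},T_y]$: this should represent $T_y-S_{x,y}$ as a first-passage time of the $\Phi(0)$-tilted $h$-transform of $X$ (the spectrally negative analogue of a three-dimensional Bessel process), whose hitting-time transform is precisely $W(y-x)e^{-\Phi(0)(y-x)}/\Wl(y-x)$, thereby delivering $R$ without the explicit excursion-rate bookkeeping.
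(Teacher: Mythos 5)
Your outer decomposition is exactly the paper's: its proof likewise writes $\PP^z[e^{-\lambda (T_y-S_{x,y})};T_y<\infty]=\PP^z[e^{-\lambda T_y};T_y<T_x]+\PP^z(T_x<T_y)\,R$ with $R=\PP^x[e^{-\lambda (T_y-S_{x,y})};T_y<\infty]$, and reads the two exit quantities off Lemma~\ref{lemma:to-laplace}. (Note that this produces the \emph{unweighted} factor $\PP^z(T_x<T_y)$, i.e.\ Lemma~\ref{lemma:to-laplace}\ref{lemma:to-laplace:i} at $\lambda=0$; the displayed formula of the theorem instead carries that bracket at general $\lambda$. Your reading agrees with the paper's own proof, and the $\lambda$-weighted bracket in the display appears to be a slip, since the journey from $z$ to $x$ contributes nothing to $T_y-S_{x,y}$.) Where you genuinely diverge is in evaluating $R$. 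The paper views $T_a-S_{-a,a}$ as the duration of the final excursion from the \emph{two-point} set $V$, applies the master formula with $Z_s=\mathbbm{1}_{\{s<T_a\}}$ and $f=e^{-\lambda\rho}\mathbbm{1}_{\{T_a<T_{-a}\}}$ to get $R=\alpha_{-a}(1)\PP^{-a}[L^{-a}_{T_a}]\hat\PP^{-a}[e^{-\lambda\rho};T_a<T_{-a}]$, evaluates the excursion factor as $\lim_{\beta\to\infty}\beta\hat\eta^{-a}_\beta[\PP^\cdot[e^{-\lambda\rho};T_a<T_{-a}]]$ via Theorem~\ref{theorem:main}\ref{theorem:main:i} and the convolution identity $(\beta-\lambda)W^{(\beta)}\star\Wl=W^{(\beta)}-\Wl$, and pins the local-time constant by setting $\lambda=0$; the section exists largely to showcase Theorem~\ref{theorem:main}. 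You instead run a Poisson race among the excursions from the single point $x$, which bypasses Sections~\ref{section:killed/conditioned-to-avoid}--\ref{section:entrance-laws} entirely and, once rigorous, is the more elementary argument; your $\lambda=0$ normalization is the same trick the paper uses to fix the unknown constant.

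The gap is in your computation of the weighted success rate $\tilde{\PP}^x[e^{-\lambda\tau_y^+};\tau_y^+<T_x]$. The approximation of the entrance law of $\tilde{\PP}^x$ by $c_\epsilon^{-1}\PP^{x+\epsilon}$ is asserted, not justified, and your reason for $c_0=\lim_{\epsilon\downarrow0}\Wl(\epsilon)/c_\epsilon$ being free of $\lambda$ --- that $(\Wl)'(0+)=2/\diffusion$ --- is vacuous precisely when $\diffusion=0$, where that derivative is $+\infty$; the correct input would be $\lim_{z\downarrow 0}W^{(q_1)}(z)/W^{(q_2)}(z)=1$. The clean repair stays entirely within Section~\ref{section:local}: apply the single-point master formula with $Z_s=e^{-\lambda s}\mathbbm{1}_{\{s<T_y\}}$ and $f=e^{-\lambda \tau_y^+}\mathbbm{1}_{\{\tau_y^+<T_x\}}$ (only the excursion beginning at $S_{x,y}$ contributes, and $e^{-\lambda S_{x,y}}e^{-\lambda(T_y-S_{x,y})}=e^{-\lambda T_y}$), giving $e^{-\Phi(\lambda)(y-x)}=\PP^x[\int_0^{T_y}e^{-\lambda s}dL^x_s]\,\tilde{\PP}^x[e^{-\lambda\tau_y^+};\tau_y^+<T_x]$; since $\PP^x[\int_0^{T_y}e^{-\lambda s}dL^x_s]=(u_\lambda(0)-e^{-\Phi(\lambda)(y-x)}u_\lambda(x-y))/u_1(0)=e^{-\Phi(\lambda)(y-x)}\Wl(y-x)/\Phi'(1)$, one gets $\tilde{\PP}^x[e^{-\lambda\tau_y^+};\tau_y^+<T_x]=\Phi'(1)/\Wl(y-x)$, confirming your rate with $c_0=\Phi'(1)$. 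The unweighted master formula then gives $R=\PP^x[L^x_{T_y}]\,\Phi'(1)/\Wl(y-x)$, and $\lambda=0$ yields $\PP^x[L^x_{T_y}]=W(y-x)e^{-\Phi(0)(y-x)}/\Phi'(1)$, exactly the paper's remark, whence $R=W(y-x)e^{-\Phi(0)(y-x)}/\Wl(y-x)$. Your time-reversal alternative is only a sketch (the reversed process and its first-passage transform are not identified) and should not be relied upon as written.
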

\begin{remark}
In the proof it will be seen that $\PP^x[L^x_{T_y}]=(\Phi'(1))^{-1}W(y-x)e^{-\Phi(0)(y-x)}$.
\end{remark}
\begin{proof}
By the master formula, taking $Z_s=\mathbbm{1}_{\{s<T_{a}\}}$ and $f=e^{-\lambda \rho}\mathbbm{1}_{\{T_{a}<T_{-a}\}}$ therein, we have
$$\PP^{-a}[e^{-\lambda (T_{a}-S_{- a, a})};T_{a}<\infty]=\alpha_{-a}(1)\PP^{-a}[ L^{-a}_{T_{ a}}]\hat{\PP}^{ -a}[e^{-\lambda \rho};T_{a}<T_{-a}],$$ and 
$$\hat{\PP}^{-a}[e^{-\lambda \rho};T_{a}<T_{-a}]=\lim_{t\downarrow 0}\hat{\PP}^{-a}[e^{-\lambda \rho};T_{a}<T_{-a},t<\rho]=\lim_{t\downarrow 0}\hat{\PP}^{-a}[\PP^{X_t}[e^{-\lambda\rho};T_{a}<T_{-a}];t<\rho]$$
$$=\lim_{\beta\to \infty}\beta \int_0^\infty e^{-\beta t}\hat{\PP}^{-a}[\PP^{X_t}[e^{-\lambda\rho};T_{a}<T_{-a}];t<\rho]dt=\lim_{\beta\to \infty}\beta\hat{\eta}_\beta^{-a}[\PP^\cdot[e^{-\lambda\rho};T_{a}<T_{-a}]].$$

We have from Lemma~\ref{lemma:to-laplace}\ref{lemma:to-laplace:ii} and Theorem~\ref{theorem:main}\ref{theorem:main:i} that $\beta\hat{\eta}_\beta^{-a}[\PP^\cdot[e^{-\lambda\rho};T_{a}<T_{-a}]]$ is equal to 
$$\beta \int_{-a}^a\frac{W^{(1)}(2a)}{e^{\Phi(1)2a}-1}\frac{W^{(\beta)}(a-y)}{ W^{(\beta)}(2a)}\frac{\Wl(y+a)}{\Wl(2a)}dy=\beta \int_{0}^{2a}\frac{W^{(1)}(2a)}{e^{\Phi(1)2a}-1}\frac{W^{(\beta)}(2a-y)}{ W^{(\beta)}(2a)}\frac{\Wl(y)}{\Wl(2a)}dy$$\footnotesize
$$=\frac{W^{(1)}(2a)}{e^{\Phi(1)2a}-1}\frac{\beta}{W^{(\beta)}(2a)\Wl(2a)}(W^{(\beta)}\star \Wl)(2a)=\frac{W^{(1)}(2a)}{e^{\Phi(1)2a}-1}\frac{\beta}{W^{(\beta)}(2a)\Wl(2a)}\frac{W^{(\beta)}(2a)-\Wl(2a)}{\beta-\lambda}$$\normalsize (because $(\beta-\lambda)W^{(\beta)}\star \Wl=W^{(\beta)}-\Wl$ as follows for instance by taking Laplace transforms), which converges to $\frac{W^{(1)}(2a)}{(e^{\Phi(1)2a}-1)\Wl(2a)}$ as $\beta\to\infty$. In consequence 
$$\PP^{- a}[e^{-\lambda (T_{ a}-S_{-a, a})};T_{a}<\infty]=\alpha_{- a}(1)\PP^{ - a}[ L^{- a}_{T_{ a}}]\frac{W^{(1)}(2a)}{(e^{\Phi(1)2a}-1)\Wl(2a)}=\PP^{ - a}[ L^{- a}_{T_{ a}}]\frac{\Phi'(1)}{\Wl(2a)}.$$
Taking $\lambda=0$ yields $\PP^{ - a}[ L^{- a}_{T_{ a}}]\frac{\Phi'(1)}{W(2a)}=\PP^{- a}(T_{a}<\infty)=\PP^{-a}(\tau_a^+<\infty)=e^{-\Phi(0)2a}$, so that 
$$\PP^{- a}[e^{-\lambda (T_{ a}-S_{-a, a})};T_{a}<\infty]=\frac{W(2a)}{\Wl(2a)}e^{-\Phi(0)2a}.$$

More generally $\PP^z[e^{-\lambda (T_y-S_{x,y})};T_y<\infty]=\PP^z[e^{-\lambda T_y};T_y<T_x]+\PP^z(T_x<T_y)\PP^x[e^{-\lambda (T_y-S_{x,y})};T_y<\infty]$ and Lemma~\ref{lemma:to-laplace} allows to conclude. 
\end{proof}

\begin{remark}
It is possible to consider the case $y<x$, at least when $\diffusion=0$, by using Lemma~\ref{lemma:to-laplace}\ref{lemma:to-laplace:i} and Theorem~\ref{theorem:main}\ref{theorem:main:ii} together with the same technique as in the proof above; for sure an integral representation for the Laplace transform involving the scale functions can be produced. However the resulting integral is much more involved in this case and it was not possible to produce a simple expression for this constellation of $x$ and $y$.
\end{remark}

\bibliographystyle{plain}
\bibliography{Excursions_of_SNLP_from_set}

\begin{thebibliography}{1}

\bibitem{bertoin}
J.~Bertoin.
\newblock {\em {L\'e}vy Processes}.
\newblock Cambridge Tracts in Mathematics. Cambridge University Press,
  Cambridge, 1996.

\bibitem{blumenthal}
R.~M. Blumenthal.
\newblock {\em Excursions of Markov Processes}.
\newblock Probability and Its Applications. Birkh{\"a}user Boston, 2012.

\bibitem{kesten}
H.~Kesten.
\newblock A convolution equation and hitting probabilities of single points for
  processes with stationary independent increments.
\newblock {\em Bulletin of the American Mathematical Society}, 75(3):573--578,
  1969.

\bibitem{kkr}
A.~Kuznetsov, A.~E. Kyprianou, and V.~Rivero.
\newblock The theory of scale functions for spectrally negative {L\'e}vy
  processes.
\newblock In {\em {L\'e}vy Matters II: Recent Progress in Theory and
  Applications: {F}ractional {L\'e}vy Fields, and Scale Functions}, pages
  97--186. Springer Berlin Heidelberg, Berlin, Heidelberg, 2013.

\bibitem{kyprianou}
A.~Kyprianou.
\newblock {\em {Fluctuations of {L\'e}vy Processes with Applications:
  Introductory Lectures}}.
\newblock Universitext. Springer Berlin Heidelberg, second edition, 2014.

\bibitem{maisonneuve}
B.~Maisonneuve.
\newblock Exit systems.
\newblock {\em The Annals of Probability}, 3(3):399--411, 1975.

\bibitem{millar}
P.~W. Millar.
\newblock Germ sigma fields and the natural state space of a {M}arkov process.
\newblock {\em Probability Theory and Related Fields}, 39(2):85--101, 1977.

\bibitem{ppr}
J.~C. Pardo, J.~L. P{\'e}rez, and V.~M. Rivero.
\newblock The excursion measure away from zero for spectrally negative {L\'e}vy
  processes.
\newblock {\em Annales de {l'I}nstitut {H}enri {P}oincar{\'e},
  {P}robabilit{\'e}s et {S}tatistiques}, 54(1):75--99, 2018.

\bibitem{sato}
K.~I. Sato.
\newblock {\em {L\'e}vy Processes and Infinitely Divisible Distributions}.
\newblock Cambridge studies in advanced mathematics. Cambridge University
  Press, Cambridge, 1999.

\end{thebibliography}
\end{document}